\newif\iftechreport 
\newif\iftechreport 
\newtheorem{algo}{Algorithm}
\newcommand{\remove}[1]{{}}
\newcommand{\cC}{{\mathcal{C}}}
\newcommand{\cF}{{\mathcal{F}}}
\newcommand{\cG}{{\mathcal{G}}}
\newcommand{\cH}{{\mathcal{H}}}
\newcommand{\RR}{\mathbb{R}}
\newcommand{\NN}{\mathbb{N}}
\newcommand{\ZZ}{\mathbb{Z}}
\newcommand{\EE}{\mathbb{E}}
\newcommand{\dom}{{\mathrm{dom}}} 
\newcommand{\prox}{\mathbf{prox}}
\newcommand{\dist}{\mathrm{dist}}
\DeclareMathOperator*{\argmin}{arg\,min}
\DeclareMathOperator*{\Min}{minimize}
\DeclareMathOperator*{\gra}{gra}
\DeclareMathOperator*{\as}{a.s.} 
\newcommand{\bc}{\begin{center}}
\newcommand{\ec}{\end{center}}
\newcommand{\bdm}{\begin{displaymath}}
\newcommand{\edm}{\end{displaymath}}
\newcommand{\beq}{\begin{equation}}
\newcommand{\eeq}{\end{equation}}
\newcommand{\bfl}{\begin{flushleft}}
\newcommand{\efl}{\end{flushleft}}
\newcommand{\bt}{\begin{tabbing}}
\newcommand{\et}{\end{tabbing}}
\newcommand{\beqn}{\begin{align}}
\newcommand{\eeqn}{\end{align}}
\newcommand{\beqs}{\begin{align*}} 
\newcommand{\eeqs}{\end{align*}}  
\newtheorem{assumption}{Assumption}
\newcommand\numberthis{\addtocounter{equation}{1}\tag{\theequation}}
\DeclarePairedDelimiter{\dotp}{\langle}{\rangle}
\DeclarePairedDelimiter{\floor}{\lfloor}{\rfloor}
\definecolor{myblue}{rgb}{.8, .8, 1}
\newcommand*\mybluebox[1]{%
\colorbox{myblue}{\hspace{1em}#1\hspace{1em}}}
\def\cut#1{{}}
\title{The Asynchronous PALM Algorithm for Nonsmooth Nonconvex Problems\thanks{This material is based upon work supported by the National Science Foundation under Award No. 1502405.
}}
\author{Damek Davis}
\institute{D. Davis \at
              Department of Mathematics, University of California, Los Angeles\\
              Los Angeles, CA 90025, USA\\
              \email{damek@math.ucla.edu}}
\date{\today}
\journalname{Report} 
\begin{document}

\maketitle
\abstract{
We introduce the Asynchronous PALM algorithm, a new extension of the Proximal Alternating Linearized Minimization (PALM) algorithm for solving nonsmooth, nonconvex optimization problems. Like the PALM algorithm, each step of the Asynchronous PALM algorithm updates a single block of coordinates; but unlike the PALM algorithm, the Asynchronous PALM algorithm eliminates the need for sequential updates that occur one after the other. Instead, our new algorithm allows each of the coordinate blocks to be updated asynchronously and in any order, which means that any number of computing cores can compute updates in parallel without synchronizing their computations. In practice, this asynchronization strategy often leads to speedups that increase linearly with the number of computing cores. 

We introduce two variants of the Asynchronous PALM algorithm, one stochastic and one deterministic. In the stochastic \textit{and} deterministic cases, we show that cluster points of the algorithm are stationary points. In the deterministic case, we show that the algorithm converges globally whenever the Kurdyka-{\L}ojasiewicz property holds for a function closely related to the objective function, and we derive its convergence rate in a common special case. Finally, we provide a concrete case in which our assumptions hold.
}






\section{Introduction}

In this paper, we tackle the nonsmooth nonconvex optimization problem
\iftechreport
  \begin{empheq}[box=\mybluebox]{align}\label{eq:mainprob}
\Min_{x \in \cH} \; f(x_1, \ldots, x_m) + \sum_{i=1}^m r_j(x_j),
\end{empheq}
\else
 \begin{align}\label{eq:mainprob}
\Min_{x \in \cH} \; f(x_1, \ldots, x_m) + \sum_{i=1}^m r_j(x_j),
\end{align}
\fi
where $\cH$ is a finite dimensional Euclidean space, $f$ is a $C^1$ function, and each $r_j$ is a proper, lower semicontinuous function. Our approach is similar to the Proximal Alternating Linearized Minimization (PALM) algorithm~\cite{bolte2014proximal}, which repeatedly, in a cyclic order, runs through coordinate blocks and performs prox-gradient steps: for all $k \in \NN$, get $x^{k+1}$ from $x^k$ via
\begin{align*}
&\text{For $j = 1, \ldots, m$} \\
&\hspace{20pt} x_j^{k+1} \in \argmin_{x_j \in \cH_j} \left\{r_j(x_j) + \dotp{ \nabla_j f(x_1^{k+1}, \ldots, x_{j-1}^{k+1}, x_j^k, \ldots, x_m^k), x_j - x_j^{k}}+ \frac{1}{2\gamma_j^k}\| x_j - x_{j}^{k}\|^2\right\}.
\end{align*}

 We, too, perform alternating prox-gradient steps on~\eqref{eq:mainprob}, but we differ from PALM in two respects: (a) we allow both stochastic \text{and} deterministic block update orders, and (b) we break the synchronization enforced by PALM by allowing several computing cores to work in parallel on local prox-gradient updates which are then chaotically, and without coordination, written to a global shared memory source. The theoretical difficulty and practical importance of (a) is negligible, but without it we could not perform (b), which is theoretically new for the PALM algorithm and sometimes results in big practical improvements for other algorithms~\cite{peng2016coordinate,mania2015perturbed,peng2015arock,recht2011hogwild,liu2014asynchronous,liu2015asynchronous}. We expect similar improvements to result from Asynchronous PALM, but for a wider class of problems that includes matrix factorization and Generalized Low Rank Models (GLRM)~\cite{udell2014generalized}.

Like most recent work on first order algorithms for nonsmooth, nonconvex optimization~\cite{Attouch2007,chouzenoux2013block,xu2014globally,boct2015inertial,boct2014inertial,frankel2015splitting,hesse2015proximal,boct2015inertial,bot2016proximal,li2015global,li2014douglas}, our analysis relies on the \textit{nonsmooth Kurdyka-{\L}ojasiewicz} (KL) property~\cite{attouch2010proximal}, which relates the growth of a function to growth of its subgradients. And we also follow the general proof recipe given in the original PALM paper~\cite{bolte2014proximal}. But we are eventually forced to depart from the theory in the PALM paper because asynchronous parallel updates introduce errors, so we must analyze a decreasing \textit{Lyapunov function} that absorbs the errors, rather than the not necessarily decreasing objective function $f + \sum_j r_j$. That becomes the main theoretical contribution of this paper, and more generally, it presents a first step for designing asynchronous parallel first-order algorithms for solving nonsmooth, nonconvex optimization problems more complex than~\eqref{eq:mainprob}.

\section{Notation}\label{section:assump}

The Asynchronous PALM algorithm solves~\eqref{eq:mainprob} in a finite dimensional Hilbert space, like $\RR^n$, which we call $\cH$. We assume the Hilbert space $\cH = \cH_1 \times \cdots \times \cH_m$ is a product of $m \in \NN$ other Hilbert spaces $\cH_1, \ldots, \cH_m$; we also define $\cH_{-j} := \cH_1 \times \cdots \times\cH_{j-1} \times \cH_{j+1} \times \cdots \times \cH_m$. Given a vector $x \in \cH$, we denote its $j$th component by $x_j \in \cH_j$. Given a sequence $\{x^k\}_{k\in \NN} \subseteq \cH$ and a vector $h \in \NN^m$, we define 
\begin{align*}
(\forall k \in \NN) \qquad  x^{k-h} =(x^{k-h_1}_1, \ldots, x^{k-h_m}_m) \numberthis\label{eq:asyncnotation}
\end{align*}
and use the convention that $x_j^k = x_j^0$ if $k \leq 0$; we also let $\cC(\{x^k\}_{k \in \NN})$ denote the set of cluster points of $\{x^k \}_{k \in \NN}$. For $j \in \{1, \ldots, m\}$, we let $\dotp{ \cdot, \cdot}: \cH_j \times \cH_j  \rightarrow \RR$ denote the inner product on $\cH_j$, and we let $\|\cdot\|$ be the corresponding norm (i.e., we do not distinguish between the different norms on the components of $\cH$). For all $x, y \in \cH$, we let $\dotp{x, y} = \sum_{j=1}^m\dotp{x_j, y_j}$  and $\|x\|:= \sqrt{\smash[b]{\dotp{x, x}}}$ be the standard inner product and norm on $\cH$. A \textit{box} $B := B_1 \times \cdots \times B_m \subseteq \cH$ is any product of balls $B_j \subseteq \cH_j$.

We define
\vspace{-7pt}\begin{align*}
r(x) := \sum_{j=1}^m r_j(x) \qquad \text{and} \qquad \Psi := f +r
\end{align*}
Throughout this paper, we assume that $\Psi$ is bounded below and that $r$ is prox-bounded~\cite[Definition 1.23]{rockafellar2009variational}: 
$$
\left(\exists \lambda_r > 0\right) : \left(\forall x \in \cH\right), \left(\forall \lambda \leq \lambda_r\right)\qquad \prox_{\lambda r}(x) := \argmin_{y \in \cH} \{r(y) + \frac{1}{2\lambda}\|x - y\|^2\} \neq \emptyset.
$$

For any point $x \in \cH$, we denote by $x_{-j} \in\cH_{-j}$, the point $x$ with the $j$th component removed. With this notation, we  assume that 
\begin{align*}
\left(\forall j \in \{1, \ldots, m\}\right), \left(\forall x \in \cH\right) \qquad \nabla_j f(x_{-j}; \cdot) : \cH_j \rightarrow \cH_{j} \text{ is } L_{j}(x_{-j}) \text{-Lipschitz.}
\end{align*}
In particular, we always have the descent lemma~\cite{opac-b1104789}:
\begin{align*}
\left(\forall j \in \{1, \ldots, m\}\right),& \left(\forall x \in \cH\right), \left(\forall y \in \cH_{j}\right) \\
 f(x_{-j}; x_j)& \leq f(x_{-j}; y) + \dotp{x_j-y, \nabla_j f(x_{-j}; y)} + \frac{L_j(x_{-j})}{2}\|x_j-y\|^2.
\end{align*}
We also assume that for any bounded set $B$, there exists a constant $L_B$ such that 
\begin{align*}
\text{$\nabla f : \cH \rightarrow \cH$ is $L_B$-Lipschitz continuous on $B$,}
\end{align*}
which is guaranteed if, for example, $f$ is $C^2$. 

For any proper, lower semi-continuous function $g : \cH \rightarrow (-\infty, \infty]$, we let $\partial_L g : \cH \rightarrow 2^\cH$ denote the \textit{limiting subdifferential} of $g$; see~\cite[Definition 8.3]{rockafellar2009variational}.

For any $\eta \in (0, \infty)$, we let $F_\eta$ denote the class of concave continuous functions $\varphi : [0, \eta) \rightarrow \RR_+$ for which $\varphi(0) = 0$; $\varphi$ is $C^1$ on $(0, \eta)$ and continuous at $0$; and for all $s \in (0, \eta)$, we have $\varphi'(s) > 0$.

A function $g : \cH \rightarrow (- \infty, \infty]$ has the \textit{Kurdyka-{\L}ojasiewicz} (KL) property at $\overline{u} \in \dom(\partial_L g)$ provided that there exists $\eta \in (0, \infty)$, a neighborhood $U$ of $\overline{u}$, and a function $\varphi \in F_\eta$ such that 
\begin{align*}
\left(\forall u \in U \cap \{ u' \mid g(\overline{u}) < g(u') < g(\overline{u}) + \eta\}\right) \qquad \varphi'(g(u) - g(\overline{u})) \text{dist}(0, \partial_L g(u)) \geq 1.  
\end{align*}
The function $g$ is said to be a \textit{KL function} provided it has the KL property at each point $u \in \dom(g)$.

We work with an underlying probability space denoted by $(\Omega, \cF, P)$, and we assume that the space $\cH$ is equipped with the Borel $\sigma$-algebra. We always let $\sigma(X) \subseteq \cF$ denote the sub $\sigma$-algebra generated by a random variable or vector $X$. We use the shorthand $\as$ to denote almost sure convergence of a sequence of random variables. 

Most of the concepts that we use in this paper can be found in~\cite{bauschke2011convex,rockafellar2009variational}.

\section{The Algorithms and Assumptions}

In this paper, we study the behavior of two different algorithms, one stochastic and one deterministic. Both algorithms solve~\eqref{eq:mainprob}. They differ only in one respect: how the active coordinates are selected at each iteration. In the stochastic case, larger stepsizes are allowed, but at the cost of weaker convergence guarantees, namely solely subsequence convergence.  In the deterministic case, only smaller stepsizes are allowed, but by leveraging the nonsmooth Kurdyka-{\L}ojasiewicz property, global sequence convergence is guaranteed---provided the sequence of iterates is bounded.

Besides increased flexibility in choosing active coordinates, the significant difference between the proposed algorithms and the standard PALM algorithm lies in the not necessarily cyclic update order and the delays, which are conveniently summarized by vectors of integers: 
\begin{align*}
d_k \in \{0, \ldots, \tau\}^m. 
\end{align*}
In both the stochastic and deterministic cases, the gradient of $f$ is  evaluated at $x^{k-d_k}$ (see~\eqref{eq:asyncnotation} for the definition of this vector). In general, $x^{k-d_k} \notin \{x^k, x^{k-1}, \ldots, x^{k-\tau}\}$, but we always have $x^{k - d_{k,j}}_j \in \{x_j^k, x_j^{k-1}, \ldots, x_j^{k-\tau}\}$. 

These choices make for a practical delay model. For example, in a software implementation of the algorithms we introduce below, we might (1) read the inconsistent iterate $x^{k-d_k}$, (2) evaluate the partial gradient $\nabla_j f(x^{k-d_k})$, (3) read the current iterate $x_j^{k}$, which might have changed from $x_j^{k-d_{k,j}}$ while we were busy computing the gradient, (4) evaluate the proximal mapping $\prox_{\gamma_j^k r_j}(x_j^{k} - \gamma_j^k \nabla_j f(x^{k-d_k}))$, and finally, (5) write any element of this proximal mapping to the computer memory. 

The two algorithms follow: 
\begin{mdframed}
\begin{algo}[Stochastic Asynchronous PALM]\label{alg:randomnonconvexSMART}
Choose $x^0 \in \cH$, $c \in (0, 1)$, and $M > 0$. Then for all $k \in \NN$, perform the following three steps:
\iftechreport \begin{enumerate} \else \begin{remunerate} \fi
\item Sample $j_k \in \{1, \ldots, m\}$ uniformly at random.
\item Choose $\gamma_j^k = \min\left\{c\left(L_{j}(x^{k-d_k}_{-j}) +\frac{2M\tau}{m^{1/2}}\right)^{-1}, \lambda_r\right\}$.
\item Set 
\begin{align*}
x_j^{k+1} \in \begin{cases}
\argmin_{x_j \in \cH_j} \left\{r_j(x_j) + \dotp{\nabla_{j} f(x^{k-d_k}), x_j - x_j^{k}}+ \frac{1}{2\gamma_j^k}\| x_j - x_{j}^{k}\|^2\right\}
 & \text{if $j = j_k$;} \\
\{x_j^k\} & \text{otherwise.}
\end{cases} \qquad \iftechreport\qed\else\fi
\end{align*}
\iftechreport \end{enumerate} \else \end{remunerate}\fi
\end{algo}
\end{mdframed}

\begin{assumption}[Stochastic Asynchronous PALM]\label{assump:stochastic}
 \iftechreport \begin{enumerate} \else \begin{remunerate} \fi
\item \label{assump:stochastic:Lipschitz}  For all $j \in \{1, \ldots, m\}$, the mapping $L_j : \cH_{-j} \rightarrow \RR$ is measurable.
\item For all $j\in \{1, \ldots, m\}$, there is a measurable selection $\zeta_j : \cH \times (0, \infty) \rightarrow \cH$ of the set-valued mapping $\prox_{(\cdot) r_j}(\cdot)$ such that for all $k \in \NN$, we have
\begin{align*}
x_{j_k}^{k+1} = \zeta_{j_k}\left(x_{j_k}^{k} - \gamma_{j_k}^k \nabla_{j_k} f(x^{k-d_k}),\gamma_{j_k}^k \right)
\end{align*}
(which by Part~\ref{assump:stochastic:Lipschitz} of this assumption makes $x_{j_k}^{k+1}$ $\sigma(x^1, \ldots, x^k)$-measurable).
\item \label{assump:stochastic:coordinate} There exists $L > 0$ such that for all $k \in \NN$ and $j \in \{1, \dots, m\}$, we have
\begin{align*}
\sup_{k \in \NN} \{ L_j(x_{-j}^{k-d_k}) \} \leq  L. \qquad \as 
\end{align*}
\item \label{assump:stochastic:global} For all $k \in \NN$, the constant $M$ satisfies
\begin{align*}
\| \nabla f(x^k) - \nabla f(x^{k-d_k}) \| \leq  M\|x^k - x^{k-d_{k}}\| \qquad \as.
\end{align*}
\iftechreport \end{enumerate} \else \end{remunerate}\fi
\end{assumption}
~\\

In the deterministic case, we have complete control over the indices $j_k$. Thus, it is often the case that the quantity
$$
\rho_\tau:=  \sup_{j, k}|\{h\mid k - \tau\leq h \leq \tau, j_h = j\}|, 
$$
which always satisfies $\rho_\tau \leq \tau$, is actually substantially less than $\tau$. In fact, $\rho_\tau$ is only equal to $\tau$ in the extreme case in which $j_h$ is constant for $\tau$ consecutive values of $h$. However, for the ideal case in which $\tau = O(m)$, in which we have $O(m)$ independent processors, all of which are equally powerful, and in which each coordinate prox-gradient subproblem is equally easy or difficult to solve, we expect that $\rho_\tau = O(1)$. Thus, in the following algorithm, we replace $\tau$ by $\sqrt{\rho_\tau\tau}$ in the stepsize formula.

\begin{mdframed}
\begin{algo}[Deterministic Asynchronous PALM]\label{alg:deterministicnonconvexSMART}
Choose $x^0 \in \cH$, $c \in (0, 1)$, and $M > 0$. Then for all $k \in \NN$, perform the following three steps:
\iftechreport \begin{enumerate} \else \begin{remunerate} \fi
\item Choose $j_k \in \{1, \ldots, m\}$.
\item Choose $\gamma_j^k = \min\left\{c\left(L_{j}(x^{k-d_k}_{-j}) +2M\sqrt{\rho_\tau\tau}\right)^{-1}, \lambda_r\right\}$.
\item Set
\begin{align*}
x_j^{k+1} \in \begin{cases}
\argmin_{x_j \in \cH_j} \left\{r_j(x_j) + \dotp{\nabla_{j} f(x^{k-d_k}), x_j - x_j^{k}}+ \frac{1}{2\gamma_j^k}\| x_j - x_{j}^{k}\|^2\right\} & \text{if $j = j_k$;} \\
\{x_j^k\} & \text{otherwise.}
\end{cases}  \qquad \iftechreport\qed\else\fi
\end{align*}
\iftechreport \end{enumerate} \else \end{remunerate}\fi
\end{algo}
\end{mdframed}

\begin{assumption}[Deterministic Asynchronous PALM]\label{assump:deterministic}
 \iftechreport \begin{enumerate} \else \begin{remunerate} \fi
\item There exists $K \in \NN$ such that for all $k \in \NN$, we have $\{1, \ldots, m\} \subseteq \{j_{k+1}, \ldots, j_{k+K}\}.$
\item \label{assump:deterministic:coordinate}There exists $L > 0$ such that for all $k \in \NN$ and $j \in \{1, \dots, m\}$, we have
\begin{align*}
\sup_{k \in \NN} \{ L_j(x_{-j}^{k-d_k}) \} \leq  L. 
\end{align*}
 \item \label{assump:determinisistic:global} For all $k \in \NN$, the constant $M$ satisfies
\begin{align*}
\| \nabla_{j_k} f(x^k) - \nabla_{j_k} f(x^{k-d_{k}}) \| \leq  M\|x^k - x^{k-d_k}\|.
\end{align*}
\iftechreport \end{enumerate} \else \end{remunerate}\fi
\end{assumption}

\subsection{A Convergence Theorem for the Semi-Algebraic Case}

Semi-Algebraic functions are an important class of objectives for which the Asynchronous PALM algorithm converges:
\begin{definition}[Semi-Algebraic Functions]\label{def:semialge}
A function $\Psi : \cH \rightarrow (0, \infty]$ is a \emph{semi-algebraic} provided that $\gra(\Psi) = \{(x, \Psi(x)) \mid  x \in \cH\}$ is a semi-algebraic set, which in turn means that there exists a finite number of real polynomials $g_{ij}, h_{ij} : \cH \times \RR\rightarrow \RR$ such that 
\begin{align*}
\gra(\Psi) := \bigcup_{j=1}^p \bigcap_{i=1}^q \{ u \in \cH \mid g_{ij}(u) = 0 \text{ and } h_{ij}(u) < 0\}.
\end{align*}
\end{definition}

Not only does Algorithm~\ref{alg:deterministicnonconvexSMART} converge when $\Psi$ is semi-algebraic, but we can also determine how quickly it converges.

\begin{theorem}[Global Convergence of Deterministic Asynchronous PALM]
Suppose that $\Psi$ is coercive, semi-algebraic, and $\nabla f$ is $M$-Lipschitz continuous on the minimal box $B$ containing the level set $\{x \mid \Psi(x) \leq \Psi(x^0)\}$. Then $\{x^k\}_{k \in \NN}$ from Algorithm~\ref{alg:deterministicnonconvexSMART} globally converges to a stationary point $x$ of $\Psi$.

Moreover, $\Psi(x^k) - \Psi(x)$ either converges in a finite number of steps, linearly, or sublinearly, and it always converges at a rate no worse than $o((k+1)^{-1})$, depending on a certain exponent of semi-algebraic functions.
\end{theorem}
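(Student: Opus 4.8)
The plan is to adapt the sufficient--decrease / relative--error / continuity recipe of~\cite{bolte2014proximal}, but to run it on a \emph{Lyapunov function} that absorbs the delay errors rather than on $\Psi$, which the asynchronous reads render nonmonotone. Coercivity will supply boundedness, the box $B$ will let me control the inconsistent reads, semi-algebraicity will supply the KL property, and the abstract KL convergence theorem will then convert these ingredients into convergence of the whole sequence together with the rate dichotomy.

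For the sufficient decrease, fix $k$, set $j = j_k$ (only block $j$ moves), test the prox subproblem at $x_j^k$, and combine with the block descent lemma to obtain
\begin{align*}
\Psi(x^{k+1}) - \Psi(x^k) \;\leq\; \dotp{\nabla_j f(x^k) - \nabla_j f(x^{k-d_k}),\, x_j^{k+1} - x_j^k} + \Big( \tfrac{1}{2} L_j(x_{-j}^k) - \tfrac{1}{2\gamma_j^k} \Big)\| x_j^{k+1} - x_j^k \|^2 .
\end{align*}
The inner product is the asynchronous error. Using the $M$-Lipschitz delay bound in Assumption~\ref{assump:deterministic}, Young's inequality, and the fact that---because a single block moves per step---$x^k - x^{k-d_k}$ telescopes over at most $\rho_\tau$ updates of each block inside $[k-\tau,k-1]$, I get
\begin{align*}
\| x^k - x^{k-d_k} \|^2 \;\leq\; \rho_\tau \sum_{h=k-\tau}^{k-1} \| x^{h+1} - x^h \|^2 .
\end{align*}
I then introduce $\Phi_k := \Psi(x^k) + \sum_{i=1}^{\tau} \beta_i \| x^{k-i+1} - x^{k-i} \|^2$ with nonincreasing weights $\beta_i \geq 0$ and, together with the Young parameter, calibrate them so the leftover error is dominated; the stepsize rule $\gamma_j^k \leq c\big(L_j(\cdot)+2M\sqrt{\rho_\tau\tau}\big)^{-1}$ is designed precisely (note the $\sqrt{\rho_\tau\tau}$) to yield a constant $a>0$ with $\Phi_{k+1}+a\|x^{k+1}-x^k\|^2\leq\Phi_k$. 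Summing gives $\sum_k\|x^{k+1}-x^k\|^2<\infty$, hence $\|x^{k+1}-x^k\|\to0$; and since $\Psi(x^k)\leq\Phi_k\leq\Phi_0=\Psi(x^0)$, coercivity keeps $\{x^k\}$ in the level set. Crucially, because $B$ is a product box, each inconsistent read $x^{k-d_k}$ also lies in $B$, so $\nabla f$ is genuinely $M$-Lipschitz at the points where the algorithm evaluates it.

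The main obstacle is the relative--error bound. The prox optimality for the active block only gives $-\nabla_j f(x^{k-d_k})-(\gamma_j^k)^{-1}(x_j^{k+1}-x_j^k)\in\partial_L r_j(x_j^{k+1})$, i.e.\ control of a single block of $\partial_L\Psi$ evaluated with a \emph{delayed} gradient. To assemble a full subgradient I would, for each inactive block $j$, invoke the coverage hypothesis (every block is updated within $K$ steps) to locate its last update $h_j\in[k-K,k-1]$ with $x_j^{h_j+1}=x_j^k$, reuse that step's optimality, and convert the stale gradients into increments via the $M$-Lipschitz bound, the stepsizes being bounded below because $L_j\leq L$. This produces $v^k\in\partial_L\Psi(x^k)$ with
\begin{align*}
\dist\!\left(0,\partial_L\Psi(x^k)\right)\;\leq\;\|v^k\|\;\leq\; b\sum_{h=k-K-\tau}^{k-1}\|x^{h+1}-x^h\|,
\end{align*}
the delicate point being that errors must be tracked simultaneously across the delay window (length $\tau$) and the coverage window (length $K$). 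Since $\|x^{k+1}-x^k\|\to0$, this forces $\dist(0,\partial_L\Psi(x^k))\to0$, so every cluster point is stationary and $\Psi$ is constant on $\cC(\{x^k\})$.

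Finally, because $\Psi$ is semi-algebraic the Lyapunov function is semi-algebraic as well (the added terms are polynomials) and hence KL; I would apply the uniformized KL inequality on the compact set $\cC(\{x^k\})$ and feed the sufficient--decrease and relative--error bounds into the abstract KL convergence theorem in the windowed form that permits the relative error to be a finite sum of consecutive increments~\cite{frankel2015splitting}. This yields $\sum_k\|x^{k+1}-x^k\|<\infty$, so $\{x^k\}$ is Cauchy and converges to a single stationary point $x$. For the rate, semi-algebraicity furnishes a desingularizing function $\varphi(s)=Cs^{1-\theta}$ with {\L}ojasiewicz exponent $\theta\in[0,1)$, and the standard trichotomy follows: $\theta=0$ gives finite termination, $\theta\in(0,\tfrac{1}{2}]$ linear convergence, and $\theta\in(\tfrac{1}{2},1)$ sublinear convergence of $\Psi(x^k)-\Psi(x)$; the uniform $o((k+1)^{-1})$ bound then holds regardless of the exponent.
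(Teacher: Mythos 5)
Your proposal follows essentially the same route as the paper: the same delay-window Lyapunov function $\Phi(z^k)=\Psi(x^k)+\sum_{h}\beta_h\|x^{k-h+1}-x^{k-h}\|^2$ with the Young-inequality treatment of the cross term calibrated by $\sqrt{\rho_\tau\tau}$, the same last-update/coverage construction of the relative-error subgradient, the same box argument guaranteeing the inconsistent reads $x^{k-d_k}$ stay where $\nabla f$ is $M$-Lipschitz, and the uniformized KL inequality plus the {\L}ojasiewicz-exponent trichotomy for the rates. The one point to tighten is that the KL inequality is applied to the Lyapunov function on $\cC(\{z^k\})\subseteq\cH^{1+\tau}$, so you also need the relative-error bound for $\dist(0,\partial_L\Phi(z^k))$ rather than only for $\partial_L\Psi(x^k)$; since the extra components of $\partial_L\Phi$ are gradients of the explicit quadratic terms, they are again controlled by the same window of increments, so this is immediate.
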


This theorem follows from Theorems~\ref{thm:asyncpalmspecific} and~\ref{cor:convergencerate}, proved below.

\subsection{General Convergence Results}

In Sections~\ref{sec:stochastic} and~\ref{sec:deterministic}, we show that (provided $\{x^k\}_{k \in \NN}$ is bounded) the cluster points of Algorithms~\ref{alg:randomnonconvexSMART} and~\ref{alg:deterministicnonconvexSMART} ($\as$) converge to stationary points $\Psi$; we also show that the objective value $\Psi(x^k)$ ($\as$) converges, that its limit exists ($\as$) and, if $x^0$ is not a stationary point of $\Psi$, that the limit is less than $\Psi(x^0)$  (in expectation). This is the content of Theorems~\ref{thm:stochasticcluster} and~\ref{thm:clusterpoingsdeterministic}

The strongest guarantee we have for the stochastic case (Algorithm~\ref{alg:randomnonconvexSMART}) is that cluster points are stationary points; this is because there is no obvious deterministic Lyapunov function that we can apply the KL inequality to, only a supermartingale. In contrast, the strongest guarantee for the deterministic case (Algorithm~\ref{alg:deterministicnonconvexSMART}) is that $\{x^k\}_{k \in \NN}$ globally converges whenever it is bounded \textit{and} a  Lyapunov function is a KL function. This is the content of Theorem~\ref{thm:KLdeterministic}.

\iftechreport\else\newpage\fi \subsection{The Strengths of Assumptions~\ref{assump:stochastic} and~\ref{assump:deterministic}}
Unless the best possible coordinate Lipschitz constant is chosen, i.e., unless
\begin{align}\label{eq:Lipschitzconstantmeasurable}
L_j(x_{-j}) = \sup_{y, y' \in \cH_{j}; y \neq y'} \frac{\|\nabla_j f(x_{-j};y) - \nabla_j f(x_{-j}; y')\|}{\|y-y'\|},
\end{align}
we cannot guarantee that $L_j(x_{-j}) $ is measurable; however, as $$g(y, y', x_{-j}) := \|\nabla_j f(x_{-j}; y) - \nabla_j f(x_{-j}; y')\|\|y-y'\|^{-1}$$ is continuous on the open set $\cH_j^2 \times \cH_{-j} \backslash \{ (y, y', x_{-j}) \mid y = y'\}$, it follows that $L_j$ defined as in~\eqref{eq:Lipschitzconstantmeasurable} is indeed measurable. 

The existence of a measurable selection of $\prox_{(\cdot) r_j}(\cdot)$ is not a strong assumption; as shown in~\cite[Exercise 14.38]{rockafellar2009variational}, it follows from our assumptions on $r$. 

Part~\ref{assump:stochastic:coordinate} of Assumption~\ref{assump:stochastic} and Part~\ref{assump:deterministic:coordinate} of Assumption~\ref{assump:deterministic} hold as long as the sequence $\{x^{k-d_k}\}_{k \in \NN}$ is ($\as$) bounded.  

Part~\ref{assump:stochastic:global} of Assumption~\ref{assump:stochastic} and Part~\ref{assump:determinisistic:global} of Assumption~\ref{assump:deterministic} are strong but can certainly be ensured, for example, by either of two obvious sufficient conditions: (a) $\nabla f$ is globally Lipschitz continuous with constant $M$ or (b) each regularizer $r_j$ has a bounded domain, in which case  $\{x^k\}_{k \in \NN} \cup \{x^{k-d_k}\}_{k \in \NN}$ is a bounded set, and because $\nabla f$ is Lipschitz on bounded sets, this effectively enforces (a) where it need be true. (Notice, too, that Part~\ref{assump:stochastic:global} of Assumption~\ref{assump:stochastic} is stronger than Part~\ref{assump:determinisistic:global} of Assumption \ref{assump:deterministic} because the left hand side of the bound depends only on the $j_k$th partial derivative. In particular, for sparsely coupled problems, the $M$ in Part~\ref{assump:determinisistic:global} of Assumption \ref{assump:deterministic} can be substantially smaller than the $M$ in Part~\ref{assump:stochastic:global} of Assumption~\ref{assump:stochastic}.)

\subsection{What's New for Asynchronous Optimization Algorithms?} 

Asynchronous optimization algorithms are typically identical to synchronous optimization algorithms except that they use delayed information wherever possible. Usually, these delays are not imposed by the user and occur naturally because multiple processors are chaotically updating the coordinates of a vector of real numbers whenever they finish an assigned task, for example, the task may be a prox-gradient update as in Algorithms~\ref{alg:randomnonconvexSMART} and~\ref{alg:deterministicnonconvexSMART}. Abstractly, most asynchronous algorithms take the form of a mapping $T :\cH \times \cH \times \{1, \ldots, m\} \rightarrow \cH$
$$
x^{k+1} := T(x^k, x^{k-d_k}, j_k),
$$
which constructs the next iterate from the current iterate by blending current and stale information, and our algorithms are no different.

Asynchronous optimization algorithms differ most sharply, then, in the choice of $T$. In the past, $T$ has taken a few different forms, for example, for $\gamma > 0$,\footnote{Let $\beta > 0$. A map $S : \cH \rightarrow \cH$ is called $\beta$-cocoercive if, for all $x, y \in \cH$, we have $\beta\|S(x) - S(y)\|^2 \leq \dotp{S(x) - S(y), x - y}.$}

\noindent\begin{tabularx}{\linewidth}{lX}
$T(x^k, x^{k-d_k}, j_k) 
= \begin{cases}
(T^0(x^{k-d_k}))_j & \text{if }j = j_k  \\
x_j^k & \text{otherwise.}
\end{cases}$ & for a Lipschitz map $T^0 : \cH \rightarrow \cH$; \\
$T(x^k, x^{k-d_k}, j_k) 
= \begin{cases}
\mathrm{Proj}_{X_j}(x_j^k - \gamma \nabla_j f(x^{k - d_k})) & \text{if } j = j_k \\
x_j^k & \text{otherwise.}
\end{cases} $ & for a convex $X_j \subseteq \cH_j$ and ($C^1$) $f : \cH \rightarrow \RR$; \\
$T(x^k, x^{k-d_k}, j_k) 
= \begin{cases}
x_j^k - \gamma (S(x^{k-d_k}))_j & \text{if }j = j_k \\
x_j^k & \text{otherwise.}
\end{cases}$ &for a cocoercive map $S : \cH \rightarrow \cH$.
\end{tabularx}
%
All three types of maps appear in the classic textbook~\cite{bertsekas1989parallel}. But since that book was released (over 25 years ago), several of the assumptions on these mappings have been weakened; see, for example, the works in~\cite{peng2016coordinate,mania2015perturbed,peng2015arock,recht2011hogwild,liu2014asynchronous,liu2015asynchronous,lian2015asynchronous,doi:10.1137/0801036,davis2016smart}, some of which contain stochastic variants or nonconvex extensions of the above mappings. (Readers interested in the history of asynchronous algorithms should see~\cite[Section 1.5]{peng2015arock}.)

\textbf{The innovation of Algorithms~\ref{alg:randomnonconvexSMART} and~\ref{alg:deterministicnonconvexSMART} compared to prior work is twofold: (i) we include the nonsmooth, nonconvex function $r$ and (ii) we use the nonsmooth KL property to guarantee \emph{global convergence} of $x^k$ to a stationary point.} For example, the second of the three above asynchronous mappings falls within our assumptions, except that in our case the sets $X_j$ need not be convex---a feature not previously available in asynchronous algorithms (in this case $r_j = \iota_{X_j}$  is an indicator function, which is the prototypical example of a nonsmooth, nonconvex function).

\section{The Stochastic Case}\label{sec:stochastic}

In this section, we analyze Algorithm~\ref{alg:randomnonconvexSMART}, which allows for large stepsizes, but requires stricter problem assumptions than Algorithm~\ref{alg:deterministicnonconvexSMART} does for obtaining subsequence convergence. To make any guarantees in the stochastic case, it is best to assume that $r$ has bounded domain, which implies that $\{x^k\}_{k \in \NN}$ is bounded. 

We advise the reader that
\begin{quote}
\centering Assumption~\ref{assump:stochastic} is in effect throughout this section.
\end{quote}

\begin{theorem}[Convergence in the Stochastic Case]\label{thm:stochasticcluster}
Let $\cF_k = \sigma(x^1, \ldots, x^k)$ and let $\cG_k = \sigma(j_k)$. Assume that $\{j_k\}_{k \in \NN}$ are IID and for all $k \in \NN$, $\{\cF_k, \cG_k\}$ are independent. 

Then, if $\{x^k\}_{k \in \NN}$ is almost surely bounded, the following hold: there exists a subset $\Omega_1 \subseteq \Omega$ with measure $1$ such that, for all $\omega \in \Omega_1$, 
\iftechreport \begin{enumerate} \else \begin{remunerate} \fi
\item \label{lem:clusterpointsstochastic:part:clustercritical}$\cC(\{x^k(\omega)\}_{k \in \NN})$ is nonempty and is contained in the set of stationary points of $\Psi$.
\item \label{lem:clusterpointsstochastic:part:criticalobjectiveconstant} The objective function $\Psi$ is finite and constant on $\cC(\{x^k(\omega)\}_{k \in \NN})$. In addition, the objective values $\Psi(x^k(\omega))$ converge, and if $x^0$ is not a stationary point of $\Psi$, then 
\begin{align*}
  \qquad  \EE\left[\lim_{k \rightarrow \infty}\Psi(x^k)\right] < \Psi(x^0).
\end{align*}
\iftechreport \end{enumerate} \else \end{remunerate}\fi
\end{theorem}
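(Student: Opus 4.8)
The plan is to follow the PALM proof recipe but to replace the objective $\Psi$, which need not decrease under asynchronous updates, by a Lyapunov function that is a supermartingale. First I would derive a one-step inequality. Fix $k$ and write $j = j_k$. Testing the $\argmin$ in Step~3 of Algorithm~\ref{alg:randomnonconvexSMART} against the feasible point $x_j^k$ gives
\begin{align*}
r_j(x_j^{k+1}) - r_j(x_j^k) \leq -\dotp{\nabla_j f(x^{k-d_k}), x_j^{k+1} - x_j^k} - \frac{1}{2\gamma_j^k}\|x_j^{k+1} - x_j^k\|^2,
\end{align*}
and the descent lemma applied with base point $x^k$ (only block $j$ moves) gives
\begin{align*}
f(x^{k+1}) - f(x^k) \leq \dotp{\nabla_j f(x^k), x_j^{k+1} - x_j^k} + \frac{L_j(x^k_{-j})}{2}\|x_j^{k+1} - x_j^k\|^2.
\end{align*}
Adding these and splitting $\nabla_j f(x^k) = \nabla_j f(x^{k-d_k}) + (\nabla_j f(x^k) - \nabla_j f(x^{k-d_k}))$ isolates the asynchrony error as an inner product against $x_j^{k+1} - x_j^k$.

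Second, I would take the conditional expectation $\EE[\,\cdot \mid \cF_k]$. Because $j_k$ is uniform and independent of $\cF_k$, and because the virtual full update $\hat{x}^{k+1}_j \in \prox_{\gamma_j^k r_j}(x_j^k - \gamma_j^k \nabla_j f(x^{k-d_k}))$ is $\cF_k$-measurable for every $j$, the single-block move averages to $\EE[\|x^{k+1} - x^k\|^2 \mid \cF_k] = \frac{1}{m}\sum_{j=1}^m \|\hat{x}^{k+1}_j - x_j^k\|^2 =: \Delta_k$. Bounding the error inner product by Young's inequality, invoking Part~\ref{assump:stochastic:global} of Assumption~\ref{assump:stochastic}, and using that only one block moves per step to telescope $\|x^k - x^{k-d_k}\|^2 \leq \tau \sum_{l=k-\tau}^{k-1}\|x^{l+1}-x^l\|^2$ yields, after matching the Young parameter to the stepsize buffer,
\begin{align*}
\EE[\Psi(x^{k+1}) \mid \cF_k] \leq \Psi(x^k) - a\,\Delta_k + b\sum_{l=k-\tau}^{k-1}\|x^{l+1}-x^l\|^2
\end{align*}
for constants $a, b > 0$; the factor $c < 1$ and the buffer $2M\tau/m^{1/2}$ in $\gamma_j^k$ are exactly what force the coefficient of $\Delta_k$ negative with slack. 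I would then set $\xi_k := \Psi(x^k) + \sum_{i=1}^\tau w_i\|x^{k-i+1}-x^{k-i}\|^2$ with decreasing weights $w_i$ chosen (by reindexing the double sum) so that the accumulated error is absorbed and $\EE[\xi_{k+1}\mid\cF_k] \leq \xi_k - a'\Delta_k$ for some $a' > 0$.

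Third, since $\Psi$ is bounded below, $\xi_k$ is a nonnegative supermartingale; the supermartingale convergence theorem gives that $\xi_k$ converges a.s.\ and $\sum_k \EE[\Delta_k] < \infty$, hence $\sum_k \Delta_k < \infty$ a.s. Thus $\|\hat{x}^{k+1}_j - x_j^k\| \to 0$ a.s.\ for \emph{every} $j$ simultaneously, not merely for the selected block, and likewise $\|x^{k+1}-x^k\|\to 0$ and $\|x^k - x^{k-d_k}\|\to 0$ a.s. This simultaneity is the crucial point for stationarity: along any subsequence $x^{k_n}\to x^\ast$, the optimality condition for the virtual update, $-\nabla_j f(x^{k_n - d_{k_n}}) - (\gamma_j^{k_n})^{-1}(\hat{x}^{k_n+1}_j - x_{j}^{k_n}) \in \partial_L r_j(\hat{x}^{k_n+1}_j)$, holds for \emph{all} $j$; passing to the limit using continuity of $\nabla f$, the bound $\gamma_j^k \geq \gamma_{\min} > 0$, and closedness of $\gra(\partial_L r_j)$ yields $-\nabla_j f(x^\ast)\in\partial_L r_j(x^\ast_j)$ for every $j$, i.e.\ $0 \in \partial_L\Psi(x^\ast)$. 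Boundedness makes $\cC(\{x^k\})$ nonempty, proving Part~\ref{lem:clusterpointsstochastic:part:clustercritical}.

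For Part~\ref{lem:clusterpointsstochastic:part:criticalobjectiveconstant}, a.s.\ convergence of $\xi_k$ together with the vanishing difference terms gives that $\Psi(x^k)$ converges a.s.; that $\Psi$ equals this common limit at every cluster point (hence is constant on $\cC$) follows from lower semicontinuity together with the standard PALM argument that the prox subproblem forces $r(x^{k_n})\to r(x^\ast)$ along the subsequence. For the strict expected decrease, the convention $x^{-i}=x^0$ gives $\xi_0 = \Psi(x^0)$, and if $x^0$ is not stationary then $\Delta_0 = \frac{1}{m}\sum_j \|\hat{x}^1_j - x^0_j\|^2 > 0$, so $\EE[\xi_1] \leq \Psi(x^0) - a'\Delta_0 < \Psi(x^0)$; Fatou's lemma applied to $\xi_k$ (bounded below) then gives $\EE[\lim_k \Psi(x^k)] = \EE[\lim_k \xi_k] \leq \EE[\xi_1] < \Psi(x^0)$. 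I expect the main obstacle to be the Lyapunov construction of the second step: choosing the Young parameter and the weights $w_i$ so that the error $b\sum_{l=k-\tau}^{k-1}\|x^{l+1}-x^l\|^2$ is fully absorbed while a strictly negative coefficient on $\Delta_k$ survives — this is where the precise stepsize (its $c<1$ discount and $\tau$-dependent buffer) is indispensable, and where the mild mismatch between $L_j(x^k_{-j})$ in the descent lemma and $L_j(x^{k-d_k}_{-j})$ in the stepsize must be controlled via the uniform bound coming from boundedness and Lipschitzness of $\nabla f$ on bounded sets.
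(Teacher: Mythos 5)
Your architecture coincides with the paper's: the same Lyapunov function (the objective plus a decreasing-weight sum of the last $\tau$ squared successive differences, calibrated by the Young parameter $C=M\tau/\sqrt{m}$), the same virtual full-update vector (the paper's $w^k$, your $\hat{x}^{k+1}$), the same supermartingale convergence theorem, and the same Jensen/telescoping treatment of the delay. The supermartingale inequality, the vanishing of $\Delta_k$ for every block simultaneously, and the strict expected decrease via Fatou all match the paper's proof.

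There is, however, one genuine gap, and it sits exactly where the paper concentrates its effort. You justify passing to the limit in the inclusion $-\nabla_j f(x^{k_n-d_{k_n}}) - (\gamma_j^{k_n})^{-1}(\hat{x}^{k_n+1}_j - x_j^{k_n}) \in \partial_L r_j(\hat{x}^{k_n+1}_j)$ by ``closedness of $\gra(\partial_L r_j)$.'' For a merely lower semicontinuous $r_j$ this graph is \emph{not} closed: the closure property of the limiting subdifferential requires, in addition to convergence of the points and of the subgradients, that $r_j(\hat{x}^{k_n+1}_j) \rightarrow r_j(x^\ast_j)$ along the subsequence. For the virtual update this can be rescued cheaply (test the prox subproblem at time $k_n$ against $y = x^\ast_j$ to get the $\limsup$ half, and use lower semicontinuity for the $\liminf$ half), but your Part~2 also needs $r_j(x_j^{k_n}) \rightarrow r_j(x^\ast_j)$ for the \emph{actual} iterates; otherwise lower semicontinuity yields only $\Psi(x^\ast) \leq \lim_k \Psi(x^k)$, not the constancy of $\Psi$ on $\cC(\{x^k\}_{k\in\NN})$. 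Here ``the standard PALM argument'' does not apply: in the asynchronous setting the block value $x_j^{k_n}$ was produced by a prox subproblem solved at the earlier time $l(k_n,j)$ (the last time block $j$ was selected) with the stale gradient $\nabla_j f(x^{l(k_n,j)-d_{l(k_n,j)}})$, so one must compare that old subproblem against the current virtual one --- the paper's two-sided $\liminf$ argument built on $l(k,j)$, including the degenerate case where $l(k,j)$ is eventually constant. Without this device your proof establishes stationarity of cluster points and constancy of $\Psi$ only modulo an unproven function-value convergence, which is the crux of the nonsmooth nonconvex analysis.
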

\begin{proof}
\indent \textbf{Notation.}
We define a few often repeated items:
\iftechreport \begin{enumerate} \else \begin{remunerate} \fi
\item \textbf{Full update vector.} For all $k \in \NN$ and $j \in \{1, \ldots, m\}$, we let
\begin{align*}
w^{k}_j = \zeta_{j}\left(x_{j}^{k} - \gamma_{j}^k \nabla_{j} f(x^{k-d_k}),\gamma_{j}^k \right) 
\end{align*}
and define $w^k := (w_1^k, \ldots, w_m^k)$. The random vector $w^k$ is $\cF_k$ measurable by Assumption~\ref{assump:stochastic}.
\item \textbf{The Lyapunov function.} Define a function $\Phi : \cH^{1+\tau} \rightarrow \cH^{1+\tau}$: 
\begin{align*}
&\left(\forall \; x(0), x(1), \ldots, x(\tau) \in \cH \right) \\
&\hspace{20pt} \Phi(x(0), x(1), \ldots, x(\tau)) = f(x(0)) + r(x(0)) + \frac{M}{2\sqrt{m}}\sum_{h=1}^{\tau}(\tau - h + 1)\|x(h) - x(h-1)\|^2.
\end{align*}
\item \textbf{The last time an update occured.} We let $l(k,j) \in \NN$ be the last time coordinate $j$ was updated: 
\begin{align*}
l(k,j) = \max(\{ q \mid j_q = j_k, q < k\} \cup \{0\}).
\end{align*}
\iftechreport \end{enumerate} \else \end{remunerate}\fi

\indent\textbf{Part~\ref{lem:clusterpointsstochastic:part:clustercritical}:} Two essential elements feature in our proof. The indispensable \emph{supermartingale convergence theorem}~\cite[Theorem 1]{robbins1985convergence}, with which we show that a pivotal sequence of random variables converges, is our hammer for nailing down the effect of randomness in Algorithm~\ref{alg:randomnonconvexSMART}:
\begin{theorem}[Supermartingale convergence theorem]\label{thm:supermartingale}
Let $(\Omega, \cF, P)$ be a probability space. Let $\mathfrak{F} := \{\cF_k\}_{k \in \NN}$ be an increasing sequence of sub $\sigma$-algebras of $\cF$ such that $\cF_k \subseteq \cF_{k+1}$. Let $b \in \RR$,  let $\{X_k\}_{k \in \NN}$ and $\{Y_k\}_{k \in \NN}$ be sequences of $[b, \infty)$ and $[0, \infty)$-valued random variables, respectively, such that for all $k \in \NN$, $X_k$ and $Y_k$ are $\cF_k$-measurable and 
\begin{align*}
(\forall k \in \NN) \qquad \EE\left[X_{k+1}\mid \cF_k\right] + Y_{k} \leq X_k .
\end{align*}
Then $\sum_{k =0}^\infty Y_k < \infty \as$ and $X_k \as$ converges to a $[b, \infty)$-valued random variable.
\end{theorem}
The other equally indispensable element of our proof is the next inequality, which, when taken together with the supermartingale convergence theorem, will ultimately show that Algorithm~\ref{alg:randomnonconvexSMART} converges: with
\begin{align*}
X_{k} &:= \Phi(x^k, x^{k-1}, \ldots, x^{k-\tau}) &&  \text{ and } &&Y_k := \frac{1}{2m} \sum_{j=1}^m\left(\frac{1}{\gamma_j^k} - L_{j}(x^k_{-j}) - \frac{2M\tau}{m^{1/2}}\right)\|w_j^k - x_j^k\|^2,  
\end{align*}
the supermartingale inequality holds
\begin{align}\label{eq:syncfejerinequality}
\left(\forall k \in \NN\right) \qquad \EE\left[X_{k+1} \mid \cF_{k}\right] + Y_k \leq  X_k.
\end{align}
 So, by the supermartingale convergence theorem, the sequence $Y_k$ is $\as$ summable and $X_k\as$ converges to a $[\inf_{x\in \cH} \Psi, \infty)$-valued random variable $X^\ast$.

At this point, we can conclude that several limits exist: 
\iftechreport \begin{enumerate} \else \begin{remunerate} \fi
\item Because $\sum_{k=0}^\infty Y_k < \infty \as$, we conclude that $\|w_j^k - x_j^k\| \as$ converges to $0$.
\item Because $\|x_j^{k+1} - x_j^k\| \leq \|w_j^k - x_j^k\|$, we conclude that $\|x_j^{k+1} - x_j^k\| \as$ converges to $0$.
\item Because $\|x^{k+1} - x^k\| \as$ converges to $0$, we conclude that, for any fixed $l \in \NN$, both terms $\|x^{k-l} - x^{k-l-1}\|$ and $\|x^k - x^{k-d_k}\| \as$ converge to $0$.
\item Because for any fixed $l \in \NN$, $\|x^{k-l} - x^{k-l-1}\| \as$ converges to zero and because $X_k\as$ converges to an $\RR$-valued-random variable $X^\ast$, we conclude that $f(x^k) + r(x^k) \as$ converges $X^\ast$.
\iftechreport \end{enumerate} \else \end{remunerate}\fi

These limits imply that certain subgradients of $f +r\as$ converge to zero; namely, if
\begin{align*}
\left(\forall j \right) \qquad A_{j}^k &:= 
\frac{1}{\gamma^k_j}(x_j^k - w_j^k) + \nabla_j f(w^{k}) - \nabla_j f(x^{k-d_k}),
\end{align*}
then a quick look at optimality conditions verifies that $(A_{1}^k, \ldots, A_m^k) \in  \nabla f(w^k) + \partial_L r(w^k) = \partial_L (f + r)(w^k) $, and moreover, 
\begin{align*}
\|(A_{1}^k, \ldots, A_m^k)\| \leq \max_{j,k}\left\{\frac{1}{\gamma_j^k}\right\}\|x^k - w^k\| + \|\nabla f(w^k) -  \nabla f(x^{k-d_k})\| \rightarrow0  \as.
\end{align*}
These limits also imply that all cluster points of $\{x^k\}_{k \in\NN}$ are $\as$ stationary points---provided there is a full measure set $\Omega_1$ such that for all $\omega \in \Omega_1$ and for every converging subsequence $w^{k_q}(\omega) \rightarrow x$ we have $f(w^{k_q}(\omega)) + r(w^{k_q}(\omega)) \rightarrow f(x) + r(x)$.

To show this, let's fix a set of full measure $\Omega_1 \subseteq \Omega$ such that for all $\omega \in \Omega_1$, the sequence $\{x^k(\omega)\}$ is bounded and all of the above limits hold. Because $\|x^k(\omega) - w^k(\omega)\| \rightarrow 0$, the cluster point sets $\cC(\{x^k(\omega)\}_{k \in \NN})$ and $\cC(\{w^k(\omega)\}_{k \in \NN})$ are equal. Thus, if we fix a cluster point $x \in \cC(\{x^k(\omega)\}_{k \in \NN})$, say $x^{k_q}(\omega) \rightarrow x$, then $w^{k_q}(\omega) \rightarrow x$. Similarly, we have 
\begin{align*}
x^{k_q - d_{k_q}}(\omega) \rightarrow x && \text{and} &&  \lim_{q \rightarrow \infty} f(x^{k_q}(\omega)) = \lim_{q \rightarrow \infty} f(w^{k_q}(\omega)) = f(x).
\end{align*}
Proving that $\lim_{q \rightarrow \infty } r_j(x_j^{k_q}(\omega)) = \lim_{q \rightarrow\infty } r_j(w_j^{k_q}(\omega))  = r_j(x_j)$ is a little subtler because $r_j$ is not continuous; it is merely lower semicontinuous.

In what follows, we suppress the dependence of $l(k,j)$ on $\omega$ and assume that for our particular choice of $\omega$, $l(k,j) \rightarrow \infty$ as $k \rightarrow \infty$; if $l(k,j)$ is eventually constant, then $x_j^k(\omega)$ is eventually constant, and then the limits claimed for $r_j(x_j^{k_q}(\omega))$ hold at once. 

First, 
\begin{align*}
r_j(w_j^{k_q}(\omega)) &\leq r_j(x_j^{k_q}(\omega)) - \dotp{\nabla_j f(x^{k_q-d_{k_q}}(\omega)), x_j^{k_q}(\omega) - w_j^{k_q}(\omega)}  - \frac{1}{2\gamma_j^k}\|w_j^{k_q}(\omega) - x_j^{k_q}(\omega)\|^2 .
\end{align*}
So $\liminf_{q \rightarrow \infty} \left(r_j(w_j^{k_q}(\omega)) -  r_j(x_j^{k_q}(\omega))\right) \leq 0$ because $ x_j^{k_q}(\omega) - w_j^{k_q}(\omega) \rightarrow 0$ and $\nabla f(x^{k_q-d_{k_q}}(\omega))$ is bounded as $q \rightarrow \infty$. 

Second, for $k_q$ large enough that $l(k_q, j) > 0$ and for any $y \in \cH_j$, we have 
\begin{align*}
&r_j(x_j^{k_q}(\omega)) + \dotp{\nabla_j f(x^{l(k_q,j) - d_{l(k_q,j)}}(\omega)) , x_j^{k_q}(\omega) - x_j^{l(k_q, j) }(\omega)} + \frac{1}{2\gamma_j^{l(k_q,j)}} \|x_j^{k_q}(\omega) - x_j^{l(k_q, j) }(\omega)\|^2
\\
&\hspace{20pt}\leq r_j(y) + \dotp{\nabla_j f(x^{l(k_q,j) - d_{l(k_q,j)}}(\omega)) , y - x_j^{l(k_q, j) }(\omega)} + \frac{1}{2\gamma_j^{l(k_q,j)}} \|y - x_j^{l(k_q, j) }(\omega)\|^2.
\end{align*}
This inequality becomes useful after rearranging, setting $y = w_j^{k_q}(\omega)$, and applying the cosine rule to break up the difference of norms:
\begin{align*}
r_j(x_j^{k_q}(\omega)) &\leq r_j(w_j^{k_q}(\omega)) + \dotp{\nabla_j f(x^{l(k_q,j) - d_{l(k_q,j)}}(\omega)) , w_j^{k_q}(\omega) - x_j^{k_q}(\omega)} \\
&\hspace{20pt} + \frac{1}{2\gamma_j^{l(k_q,j)}} \left[\|w_j^{k_q}(\omega) - x_j^{l(k_q, j)}(\omega)\|^2 - \|x_j^{k_q}(\omega) - x_j^{l(k_q, j)}(\omega)\|^2  \right] \\
&= r_j(w_j^{k_q}(\omega)) +  \dotp{\nabla_j f(x^{l(k_q,j) - d_{l(k_q,j)}}(\omega)) , w_j^{k_q}(\omega) - x_j^{k_q}(\omega)} \\
&\hspace{20pt} + \frac{1}{2\gamma_j^{l(k_q,j)}} \left[ 2\dotp{w_j^{k_q}(\omega) - x_j^{l(k_q, j)}(\omega), w_j^{k_q}(\omega) - x_j^{k_q}(\omega)}  -\|w_j^{k_q}(\omega) - x_j^{k_q}(\omega)\|^2_j \right].
\end{align*}
All the iterates are assumed to be bounded, the inverse step sizes, $(2\gamma_j^{l(k_q,j)})^{-1}$, are bounded, $\nabla_j f$ is continuous, and $w_j^k(\omega) - x_j^k(\omega) \rightarrow 0$, so we have $$\liminf_{q \rightarrow \infty} \left(r_j(x_j^{k_q}(\omega)) -  r_j(w_j^{k_q}(\omega))\right) \leq 0.$$ 

Altogether, 
\begin{align*}
\lim_{q \rightarrow \infty} \left(r_j(x_j^{k_q}(\omega)) -  r_j(w_j^{k_q}(\omega))\right) = 0.
\end{align*}
This difference converges to zero, but we still need to show that the sequence of objective values $r_j(x_j^{k_q}(\omega))$ converges to $r_j(x_j)$.

For this, we use two properties: First, by lower semicontinuity, we have
\begin{align*}
\liminf_{q \rightarrow \infty} r_j(x_j^{k_q}(\omega)) \geq r_j(x_j).
\end{align*}
Second, by the definition of $w_j^{k_q}(\omega)$ as a proximal point, we have
\begin{align*}
&\limsup_{q \rightarrow \infty} r_j(w_j^{k_q}(\omega)) \\
&\leq \limsup_{q \rightarrow \infty} \left(r_j(x_j) + \dotp{ \nabla_j f(x^{k_q-d_{k_q}}(\omega)), x_j - w_j^{k_q }(\omega)} + \frac{1}{2\gamma_j^k}\|x_j - x_j^{k_q }(\omega)\|^2_j.\right)\\
&\leq r_j(x_j).
\end{align*}
Therefore, $\lim_{q \rightarrow\infty} r_j(x_j^{k_q}(\omega)) = \lim_{q \rightarrow\infty} r_j(w_j^{k_q}(\omega)) = r_j(x_j)$, and altogether, 
\begin{align*}
f(w^{k_q}(\omega)) + r(w^{k_q}(\omega)) \rightarrow f(x) + r(x); &&  &&(A_1^k, \ldots, A_m^k) \rightarrow 0; 
\end{align*}
and, hence, $0 \in \partial_L (f + r)(x)$.
~\\~\\~

We finish the proof of Part~\ref{lem:clusterpointsstochastic:part:clustercritical} with the proof of~\eqref{eq:syncfejerinequality}.
\begin{proof}[of~\eqref{eq:syncfejerinequality}]
We bound the smooth term first:
\begin{align*}
&\EE\left[ f(x^{k+1}) \mid \cF_k\right] \leq \frac{1}{m}\left(\sum_{j=1}^m f(x^k)+ \dotp{\nabla_j f(x^k), w_j^k - x_j^k} + \frac{L_{j}(x^k_{-j})}{2}\|w_j^k - x_j^k\|^2\right)
\end{align*}
Next we bound the nonsmooth term:
\begin{align*}
\EE\left[r_j(x_j^{k+1}) \mid \cF_k\right] &\leq r_j^k(x_j^k) - \frac{1}{m} \dotp{ \nabla_j f(x^{k-d_k}), w_j^k - x_j^k} - \frac{1}{2\gamma_j^km} \|w_j^k - x_j^k\|^2.
\end{align*}
Both terms together now:
\begin{align*}
\EE\left[f(x^{k+1}) + \sum_{j=1}^m r_j(x_j^{k+1}) \mid \cF_k\right] &\leq f(x^k) + \sum_{j=1}^m r_j(x_j^k) + \frac{1}{m}\dotp{\nabla f(x^k) -  \nabla f(x^{k-d_k}), w^k - x^k} \\
&\hspace{20pt}- \frac{1}{2m}\sum_{j=1}^m\left(\frac{1}{\gamma_j^k} - L_{j}(x^k_{-j})\right)\|w_j^k - x_j^k\|^2.
\end{align*}

The cross term needs care. In particular, the following sequence of inequalities is true for any $C > 0$:
\begin{align*}
&\dotp{\nabla f(x^k) -  \nabla f(x^{k-d_k}), w^k - x^k}\\
&\leq M\|x^k - x^{k-d_k}\|\|w^k - x^k\|  \qquad \text{(by Assumption~\ref{assump:stochastic})}\\
&\leq \frac{M^2}{2C} \|x^k - x^{k-d_k}\|^2 + \frac{C}{2}\|w^k - x^k\|^2 \\
&\leq  \frac{M^2}{2C}\sum_{j=1}^md_{k,j} \sum_{h=k-d_{k,j}+1}^{k}\|x_j^{h} - x_j^{h-1}\|^2 + \frac{C}{2}\|w^k - x^k\|^2 \qquad \text{(by Jensen's inequality)} \\
&\leq \frac{M^2\tau}{2C}\sum_{j=1}^m\sum_{h=k-\tau+1}^{k}\|x_j^{h} - x_j^{h-1}\|^2  + \frac{C}{2}\|w^k - x^k\|^2 \\
&= \left(\frac{M^2\tau}{2C}\sum_{h=k-\tau+1}^{k}(h-k+\tau)\|x^{h} - x^{h-1}\|^2 - \frac{M^2\tau}{2C}\sum_{h=k-\tau+2}^{k+1}(h-(k+1)+\tau)\|x^{h} - x^{h-1}\|^2 \right)\\
&\hspace{20pt} + \frac{M^2\tau^2}{2C}\|x^{k+1} - x^k\|^2 + \frac{C}{2}\|w^k - x^k\|^2.
\end{align*}
We collect all the alternating terms in the sequence $\{\kappa_k\}_{k \in \NN}$, defined by
$$
\kappa_k := \frac{M}{2\sqrt{m}}\sum_{h=k-\tau+1}^{k}(h-k+\tau)\|x^{h} - x^{h-1}\|^2, 
$$
and  set $C =M\tau(\sqrt{m})^{-1}$. Thus, from $\EE\left[\|x^{k+1} - x^k\|^2 \mid\cF_k\right] = m^{-1}\|w^k - x^k\|^2,$ we have
\begin{align*}
&\EE\left[ \kappa_{k+1} \mid \cF_k\right] \\
&\leq \kappa_k - \frac{1}{m}\dotp{\nabla f(x^k) -  \nabla f(x^{k-d_k}), w^k - x^k} + \frac{M^2\tau^2}{2mC} \EE\left[\|x^{k+1} - x^k\|^2 \mid\cF_k\right] + \frac{C}{2m}\|w^k - x^k\|^2 \\
&=\kappa_k - \frac{1}{m}\dotp{\nabla f(x^k) -  \nabla f(x^{k-d_k}), w^k - x^k}   + \left(\frac{M^2\tau^2}{2m^2C} + \frac{C}{2m}\right)\|w^k - x^k\|^2\\
&=\kappa_k - \frac{1}{m}\dotp{\nabla f(x^k) -  \nabla f(x^{k-d_k}), w^k - x^k}  +\frac{M\tau}{m^{3/2}}\|w^k - x^k\|^2.
\end{align*}
Therefore, we have
\begin{align*}
&\EE\left[f(x^{k+1}) + \sum_{j=1}^m r_j(x_j^{k+1}) + \kappa_{k+1} \mid \cF_k\right] \\
&\leq f(x^k) + \sum_{j=1}^m r_j(x_j^k) + \kappa_k  - \frac{1}{2m} \sum_{j=1}^m\left(\frac{1}{\gamma_j^k} - L_{j}(x^k_{-j}) - \frac{2M\tau}{m^{1/2}}\right)\|w_j^k - x_j^k\|^2. \numberthis\label{eq:stochasticlyapunov}
\end{align*}
In particular for all $k \in \NN$, we have $\Phi(x^k, x^{k-1}, \ldots, x^{k-\tau}) =  f(x^k) + \sum_{j=1}^m r_j(x_j^k) + \kappa_k$, so~\eqref{eq:syncfejerinequality} follows.  \qquad \iftechreport\qed\else\fi
\end{proof}

\indent\textbf{Part~\ref{lem:clusterpointsstochastic:part:criticalobjectiveconstant}:} Let $\omega \in \Omega_1$ (where $\Omega_1$ is defined in Part~\ref{lem:clusterpointsstochastic:part:clustercritical}), let $C$ denote the  limit of $\Psi(x^k(\omega))$ as $k \rightarrow \infty$ (which exists by Part~\ref{lem:clusterpointsstochastic:part:clustercritical}), let $x \in \cC(\{x^k(\omega)\}_{k \in \NN})$, and suppose that $x^{k_q}(\omega) \rightarrow x$. Then $C = \lim_{q \rightarrow \infty} \Psi(x^{k_q}(\omega)) = \Psi(x)$ (again, by Part~\ref{lem:clusterpointsstochastic:part:clustercritical}). Thus, $\Psi$ is constant on $\cC(\{x^k(\omega)\}_{k \in \NN})$.

The bound on the limit of the objective value is a consequence of~\eqref{eq:syncfejerinequality}: First, 
$$
\Phi(x^0, x^{-1}, \ldots, x^{-\tau}) = \Psi(x^0).
$$
Second, by the tower property of expectations
\begin{align*}
\left(\forall k \in \NN\right) \qquad& \EE\left[\Phi(x^{k+1}, x^{k}, \ldots, x^{k-\tau+1})\right] \leq \EE\left[\Phi(x^{k}, x^{k-1}, \ldots, x^{k-\tau}) - Y_k\right]; \\
\implies&\EE\left[\Phi(x^{k}, x^{k-1}, \ldots, x^{k-\tau})\right] \leq \Psi(x^0) - \sum_{i=0}^{k-1} \EE\left[Y_i\right].
\end{align*}
Third, Fatou's lemma implies that 
\begin{align}\label{eq:Fatou}
\EE\left[\liminf_{k \rightarrow \infty} \Phi(x^{k}, x^{k-1}, \ldots, x^{k-\tau})\right] \leq  \liminf_{k \rightarrow \infty}\EE\left[ \Phi(x^{k}, x^{k-1}, \ldots, x^{k-\tau})  \right] \leq  \Psi(x^0) - \sum_{i=0}^\infty \EE\left[Y_i\right].
\end{align}
We leverage this bound and Part~\ref{lem:clusterpointsstochastic:part:clustercritical}, which shows that that $\Phi(x^{k}, x^{k-1}, \ldots, x^{k-\tau})\, \as$ converges and $\Phi(x^{k}, x^{k-1}, \ldots, x^{k-\tau}) - \Psi(x^k) \rightarrow 0 \,\as$, to show that $$\EE\left[\lim_{k \rightarrow \infty} \Phi(x^{k}, x^{k-1}, \ldots, x^{k-\tau})\right] = \EE\left[\lim_{k \rightarrow \infty} \Psi(x^{k})\right].$$

Finally, we have only the strict decrease property left to prove: If $x^0$ is not a stationary point, then $\EE[Y_0] = \EE\left[ \|w^0 - x^0\|\right] = \|w^0 - x^0\| > 0$. Thus, the decrease property follows from~\eqref{eq:Fatou}. 
\iftechreport\qed\else\fi\end{proof}

\begin{remark}
The connectedness and compactness of $\cC(\{x^k(\omega)\}_{k \in \NN})$ are implied by the limit $x^{k}(\omega) - x^{k+1}(\omega) \rightarrow 0$; see~\cite[Remark 3.3]{bolte2014proximal} for details.
\end{remark}

\section{The Deterministic Case}\label{sec:deterministic}

Stochastic Asynchronous PALM (Algorithm~\ref{alg:randomnonconvexSMART}) allows for large stepsizes, but stochasticity makes it difficult to show that the sequence of points $\{x^k \}_{k \in \NN}$ actually converges, so we do not pursue such a result.  Instead in this section we prove that the sequence of points $\{x^k\}_{k \in \NN}$ generated by Deterministic Asynchronous PALM (Algorithm~\ref{alg:deterministicnonconvexSMART}) converges, but at the cost of using a  smaller stepsize. 

The key property for us here, but unavailable in the stochastic setting, is the KL property, which we will assume holds for a function $\Phi : \cH^{1+\tau} \rightarrow \cH^{1+\tau}$, defined by
\begin{align*}
&\left(\forall \; x(0), x(1), \ldots, x(\tau) \in \cH \right) \\
&\hspace{20pt} \Phi(x(0), x(1), \ldots, x(\tau)) = f(x(0)) + r(x(0)) + \frac{M\sqrt{\rho_\tau}}{2\sqrt{\tau}}\sum_{h=1}^{\tau}(\tau - h +  1)\|x(h)- x(h-1)\|^2.
\end{align*}

Then we proceed in two parts: first, we show that the cluster points, if any, of the sequence
$$
z^k := (x^k, \ldots, x^{k-\tau})
$$
are of the form $(x, \ldots, x)$ for some $x \in \cH$ \textit{and} $x$ and $(x, \ldots, x)$ are stationary points of $\Psi$ and $\Phi$ respectively; and second, we show that if $\Phi$ is a KL function and if the sequence $z^k$ is bounded, it will converge, i.e., it has only one cluster point. Along the way we will see that if $x^0$ is not a stationary point, Algorithm~\ref{alg:deterministicnonconvexSMART} decreases the objective value below that of $\Psi(x^0)$.

We advise the reader that
\begin{quote}
\centering Assumption~\ref{assump:deterministic} is in effect throughout this section.
\end{quote}

\subsection{Cluster points}

\begin{theorem}[Convergence in the Deterministic Case]\label{thm:clusterpoingsdeterministic}
The sequence $\{x^k\}$ lies completely within the level set: 
$$
\{x^k\}_{k \in \NN} \subseteq \{ x \mid \Psi(x) \leq \Psi(x^0)\}
$$
Moreover if $\{x^k\}_{k \in \NN}$ is bounded, then 
\iftechreport \begin{enumerate} \else \begin{remunerate} \fi
\item \label{lem:clusterpointsdeterministic:part:clustercritical}The set $\cC(\{z^k\}_{k \in \NN})$ (respectively $\cC(\{x^k\}_{k \in \NN})$) is nonempty and contained in the set of stationary points of $\Phi$  (respectively $\Psi$). Moreover, 
\begin{align*}
\cC(\{z^k\}_{k \in \NN}) = \{ (x, \ldots, x) \in \cH^{1 + \tau} \mid x \in \cC(\{x^k\}_{k \in \NN}) \}
\end{align*}
\item \label{lem:clusterpointsdeterministic:part:criticalobjectiveconstant} The objective function $\Phi$ (respectively $\Psi$) is finite and constant on $\cC(\{z^k\}_{k \in \NN})$ (respectively $\cC(\{x^k \}_{k \in \NN})$). In addition, the objective values $\Phi(z^k)$ (respectively $\Psi(x^k)$) converge, and if $x^0$ is not a stationary point of $\Psi$, then 
\begin{align*}
\left(\forall x^\ast \in \cC(\{x^k\}_{k \in \NN})\right)  \qquad \Psi(x^\ast) = \lim_{k \rightarrow \infty}\Psi(x^k) < \Psi(x^0).
\end{align*}
\iftechreport \end{enumerate} \else \end{remunerate}\fi
\end{theorem}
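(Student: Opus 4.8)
The plan is to reproduce the Lyapunov machinery of Theorem~\ref{thm:stochasticcluster}, but in the deterministic setting the conditional expectations collapse and we obtain a genuine monotone decrease of $\Phi(z^k)$ rather than merely a supermartingale. First I would establish the deterministic analogue of~\eqref{eq:syncfejerinequality}: combining the descent lemma applied to the active block $j_k$ with the optimality condition for its proximal update gives
\begin{align*}
\Psi(x^{k+1}) \leq \Psi(x^k) + \dotp{\nabla_{j_k} f(x^k) - \nabla_{j_k} f(x^{k-d_k}), x_{j_k}^{k+1} - x_{j_k}^k} - \frac{1}{2}\left(\frac{1}{\gamma_{j_k}^k} - L_{j_k}(x_{-j_k}^k)\right)\|x_{j_k}^{k+1} - x_{j_k}^k\|^2,
\end{align*}
and the cross term is absorbed by a telescoping sequence $\kappa_k$ of past increments exactly as before. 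The one genuinely new calculation is the counting that produces $\sqrt{\rho_\tau\tau}$ in place of $\tau$: since at each step $h$ only block $j_h$ moves, the increments $\|x^h - x^{h-1}\|^2$ entering the Jensen bound on $\|x^k - x^{k-d_k}\|^2$ are supported on at most $\rho_\tau$ indices $h$ within any length-$\tau$ window, so one factor of $\tau$ sharpens to $\rho_\tau$. The net result is a per-step decrease proportional to a nonnegative multiple of $\|x_{j_k}^{k+1} - x_{j_k}^k\|^2$ (the deterministic analogue of $Y_k$, with $m\mapsto 1$ and $\tau \mapsto \sqrt{\rho_\tau\tau}$), nonnegativity being enforced by the stepsize rule of Algorithm~\ref{alg:deterministicnonconvexSMART}.

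With the inequality $\Phi(z^{k+1}) + (\text{nonnegative}) \leq \Phi(z^k)$ in hand, the easy consequences follow. Since the correction terms in $\Phi$ are nonnegative, $\Psi(x^k) \leq \Phi(z^k) \leq \Phi(z^0) = \Psi(x^0)$, which is exactly the level-set containment. Because $\Psi$ is bounded below, $\Phi(z^k)$ is nonincreasing and bounded below, hence convergent, and telescoping the decrease yields $\sum_k \|x^{k+1} - x^k\|^2 < \infty$, so $\|x^{k+1} - x^k\| \to 0$. This forces $\|x^k - x^{k-d_k}\| \to 0$ (delays are bounded by $\tau$) and, for any subsequence $x^{k_q} \to x$, also $x^{k_q - l} \to x$ for each fixed $l \leq \tau$; consequently $z^{k_q} \to (x,\ldots,x)$, which gives the diagonal identity $\cC(\{z^k\}_{k\in\NN}) = \{(x,\ldots,x)\mid x \in \cC(\{x^k\}_{k\in\NN})\}$ and reduces stationarity of $\Phi$ to that of $\Psi$.

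The heart of the proof, and where it departs most from the stochastic case, is showing every cluster point is stationary. The obstacle is that a single iteration updates only one block, so no full subgradient of $\Psi$ is available at any one step; instead I would assemble it block by block over a bounded window. Let $l(k,j)<k$ be the last iteration at which block $j$ was active, so that $x_j^{k} = x_j^{l(k,j)+1}$; by Assumption~\ref{assump:deterministic} each block is refreshed within $K$ steps, whence $k_q - l(k_q,j)$ stays bounded and $l(k_q,j)\to\infty$. The optimality condition at time $l(k_q,j)$ reads
\begin{align*}
-\nabla_j f\bigl(x^{l(k_q,j) - d_{l(k_q,j)}}\bigr) - \frac{1}{\gamma_j^{l(k_q,j)}}\bigl(x_j^{l(k_q,j)+1} - x_j^{l(k_q,j)}\bigr) \in \partial_L r_j(x_j^{k_q}),
\end{align*}
and as $q\to\infty$ the left side tends to $-\nabla_j f(x)$ while $x_j^{k_q}\to x_j$. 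Closedness of the graph of $\partial_L r_j$ then yields $-\nabla_j f(x) \in \partial_L r_j(x_j)$ for every $j$, i.e. $0 \in \partial_L\Psi(x)$, \emph{provided} we also know $r_j(x_j^{k_q}) \to r_j(x_j)$. That function-value convergence is the main difficulty and would be proved by the same lower-semicontinuity sandwich used in the stochastic case: lower semicontinuity gives $\liminf_q r_j(x_j^{k_q}) \geq r_j(x_j)$, while testing the proximal subproblem defining $x_j^{l(k_q,j)+1}$ against the point $x_j$ supplies the matching $\limsup_q r_j(x_j^{k_q}) \leq r_j(x_j)$.

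Finally, Part~\ref{lem:clusterpointsdeterministic:part:criticalobjectiveconstant} follows from bookkeeping on the convergent sequence $\Phi(z^k)$. Since $\Phi(z^k)$ converges and $\Phi(z^k) - \Psi(x^k) = \kappa_k \to 0$, the values $\Psi(x^k)$ converge to the same limit, and the function-value continuity just established forces $\Psi$ to equal this common limit at every cluster point; hence $\Psi$ (and therefore $\Phi$) is finite and constant on the cluster set. For the strict decrease, if $x^0$ is not stationary then, mirroring the $\EE[Y_0]>0$ argument of the stochastic proof, the telescoped total decrease is strictly positive—a strictly positive increment occurs at the first update of a block at which $x^0$ violates the optimality condition, which by Assumption~\ref{assump:deterministic} happens within $K$ steps—so $\lim_{k\to\infty}\Psi(x^k) = \lim_{k\to\infty}\Phi(z^k) < \Phi(z^0) = \Psi(x^0)$.
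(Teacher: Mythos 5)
Your proposal is correct and follows essentially the same route as the paper: the same Lyapunov function $\Phi$ with the telescoping $\kappa_k$ absorbing the cross term, the same $\rho_\tau$ counting in the Jensen step, the same block-by-block assembly of subgradients via $l(k,j)$, and the same lower-semicontinuity sandwich to get $r_j(x_j^{k_q}) \to r_j(x_j)$. The only cosmetic difference is that the paper writes out the explicit subgradient vectors $(A_1^k,\ldots,A_m^k,B^k)$ of $\Phi$ (reused later for the KL and rate arguments), whereas you pass directly to stationarity of $\Psi$; both are fine for this theorem.
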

\begin{proof}
\textbf{Notation.}
We let $l(k,j) \in \NN$ be the last time coordinate $j$ was updated: 
\begin{align*}
l(k,j) = \max(\{ q \mid j_q = j, q < k\} \cup \{0\}).
\end{align*}

We delay the proof of the level set inclusion for a moment and return to it at the end of the proof. 

\indent\textbf{Part~\ref{lem:clusterpointsdeterministic:part:clustercritical}:} This proof is similar to the stochastic proof, but has the added simplicity of being completely deterministic. For example,  we show that with
\begin{align*}
X_{k} &:= \Phi(x^k, x^{k-1}, \ldots, x^{k-\tau}) &&  \text{ and } &&Y_k := \left(\frac{1}{\gamma_{j_k}^k} - L_{j_k}(x^k_{-j_k}) - 2M\sqrt{\rho_\tau\tau}\right)\|x_{j_k}^{k+1} - x_{j_k}^k\|^2,  
\end{align*}
the Fej{\'e}r inequality holds
\begin{align}\label{eq:syncfejerinequalitydeterministic}
\left(\forall k \in \NN\right) \qquad X_{k+1}  + Y_k \leq  X_k,
\end{align}
which implies that $\sum_{k=0}^\infty Y_k < \infty$ and that $X_k$ converges to a real number $X^\ast$ ($X_k$ is lower bounded); and with this inequality in hand, we have
\iftechreport \begin{enumerate} \else \begin{remunerate} \fi
\item Because $\sum_{k=0}^\infty Y_k < \infty$, we conclude that $\|x_{j_k}^{k+1} - x_{j_k}^k\|$ converges to $0$.
\item Because $\|x^{k+1} - x^k \| \leq \|x_{j_k}^{k+1} - x_{j_k}^k\|$, we conclude that $\|x^{k+1} - x^k \|$ converges to $0$.
\item  Because $\|x^{k+1} - x^k\|$ converges to $0$, we conclude that, for any fixed $l \in \NN$, all three terms $\|x^{k-l} - x^{k-l-1}\|$,  $\|x^k - x^{k-d_k}\|$, and $\|x^{k} - x^{l(k,j) - d_{l(k,j)}}\| $ converge to $0$.
\item Because for any fixed $l \in \NN$, $\|x^{k-l} - x^{k-l-1}\|$ converges to zero and because $X_k$ converges to $X^\ast$, we conclude that $f(x^k) + r(x^k)$ converges $X^\ast$.
\iftechreport \end{enumerate} \else \end{remunerate}\fi

These limits imply that certain subgradients of $\Phi$ converge to zero; namely, if, for all $k$ and $j$, we set
\begin{align*}
A_{j}^k &=\begin{cases} 
\frac{1}{\gamma^k_j}(x_j^k - x_j^{k+1}) + \nabla_j f(x^{k+1}) - \nabla_j f(x^{k-d_k})  + M\sqrt{\rho_\tau\tau}(x_j^{k+1} - x_j^{k})  &\text{if $j = j_k$;}\\
\frac{1}{\gamma^k_j}(x_j^{l(j, k)} - x_j^{k+1}) + \nabla_j f(x^{k+1}) - \nabla_j f(x^{l(k,j) - d_{l(j,k)}}) &\text{otherwise;}
\end{cases}\\
B^k &= \begin{bmatrix}
M\frac{\sqrt{\rho_\tau}(\tau-1)}{\sqrt{\tau}}(x^{k} - x^{k-1})\\
\vdots \\
M\frac{\sqrt{\rho_\tau}}{\sqrt{\tau}}(x^{k-\tau + 2} - x^{k- \tau+1}) 
\end{bmatrix},
\end{align*}
then a quick look at optimality conditions verifies $(A_{1}^k, \ldots, A_{m}^{k}, B^k) \in  \partial_L \Phi(z^{k+1})$ and, if we define $C^k :=  (A_1^{k}, \ldots, A_m^k) -  M\sqrt{\rho_\tau\tau}(x^{k+1} - x^{k})$, then $C^k \in \partial_L \Psi(x^{k+1})$.  In addition, there exists a constant $c_0$ such that 
\begin{align*}
\|(A_1^k, \ldots, A_m^k, B^k)\| \leq c_0  \sum_{h=k-\tau-K}^{k} \|x^{h +1} - x^{h}\| \rightarrow 0.\numberthis \label{eq:subgradientbound} 
\end{align*}
(In particular, $C^k \rightarrow 0$, too.) These limits also imply that all cluster points points are stationary points of $\Phi$---provided that, for every converging subsequence $x^{k_q} \rightarrow x$, we have $\Phi(z^{k_q})\rightarrow \Phi(x, \ldots, x)$.

To show this, we follow the same path as we did in the stochastic case: Fix a cluster point $x \in \cC(\{x^k\}_{k \in \NN})$, say $x^{k_q}\rightarrow x$. Then
\begin{align*}
z^{k_q} \rightarrow (x, \ldots, x); && z^{l(k_q, j)} \rightarrow (x, \ldots, x) ;&&  \lim_{q \rightarrow \infty} f(x^{k_q})  = f(x).
\end{align*}
Again, proving that $\lim_{q \rightarrow \infty} r_j(x_j^{k_q}) = r_j(x_j)$ is a little subtler because $r_j$ is not continuous; it is merely lower semicontinuous. 

For this, we use two properties: First, by lower semicontinuity, we have
\begin{align*}
\lim_{q \rightarrow \infty} r_j(x_j^{k_q}) \geq r_j(x_j).
\end{align*}

Second, by the definition of $x_j^{k}$ as a proximal point, for all $y \in \cH_j$ and $k \in \NN$, we have
\begin{align*}
&r_j(x_j^{k}) + \dotp{\nabla_j f(x^{l(k, j) - d_{l(k,j)}}), x_j^k  - x_j^{l(k,j)}} + \frac{1}{2\gamma^{l(k,j)}_j} \| x_j^{k} -x_{j}^{l(k,j) } \|^2 \\
&\hspace{20pt}\leq r_j(y )+ \dotp{\nabla_j f(x^{l(k, j) - d_{l(k,j)}}), y  - x_j^{l(k,j)}} + \frac{1}{2\gamma^{l(k,j)}_j} \| y -x_{j}^{l(k,j)} \|^2.
\end{align*}
In particular, by rearranging the above inequality for $k = k_q$ and $y = x_j$ and by taking a $\limsup$, we have 
\begin{align*}
&\limsup_{q \rightarrow \infty} r_j(x_j^{k_q}) \\
&\leq \limsup_{q \rightarrow \infty} \left(r_j(x_j) + \dotp{ \nabla_j f(x^{l(k_q,j)-d_{l(k_q, j)}}), x_j- x_j^{k }} + \frac{1}{2\gamma_j^{l(k_q,j)}}\|x_j - x_j^{l(k_q, j) }\|^2\right)\\
&\leq r_j(x_j).
\end{align*}
Therefore, $\lim_{q \rightarrow\infty} r_j(x_j^{k_q}) = r_j(x_j)$.

Altogether, because $\lim_{k \rightarrow \infty} \Phi(z^{k}) = \lim_{k \rightarrow \infty} f(x^k) + r(x^k)$, we have 
\begin{align*}
\Phi(z^{k_q}) \rightarrow \Phi(x, \ldots, x) = f(x) + r(x) && \text{and} && \Psi(x^{k_q}) \rightarrow f(x) + r(x).
\end{align*}
Moreover, the subgradients 
\begin{align*}
(A_1^{k_q-1}, \ldots, A_m^{k_q-1}, B^{k_q-1}) \in \partial_L \Phi(z^{k_q}) &&  \text{and} &&  C^{k_q -1}  \in \partial_L\Psi(x^{k_q})
\end{align*}
converge to zero. Therefore, $0 \in \partial_L \Phi(x, \ldots, x)$ and $0 \in \partial_L \Psi(x)$.
~\\~\\~

We finish the proof of Part~\ref{lem:clusterpointsdeterministic:part:clustercritical} with the proof of~\eqref{eq:syncfejerinequalitydeterministic}. 
\begin{proof}[of~\eqref{eq:syncfejerinequalitydeterministic}]
We bound the smooth term first:
\begin{align*}
&f(x^{k+1}) \leq f(x^k)+ \dotp{\nabla_{j_k} f(x^k), x_{j_k}^{k+1} - x_{j_k}^k} + \frac{L_{j_k}(x^k_{-j_k})}{2}\|x_{j_k}^{k+1} - x_{j_k}^k\|^2.
\end{align*}
Next we bound the nonsmooth term:
\begin{align*}
r_{j_k}(x_{j_k}^{k+1}) &\leq r_{j_k}(x_{j_k}^k) -  \dotp{ \nabla_{j_k} f(x^{k-d_k}), x_{j_k}^{k+1} - x_{j_k}^k} - \frac{1}{2\gamma_{j_k}^k} \|x_{j_k}^{k+1} - x_{j_k}^k\|^2.
\end{align*}
Both terms together now:
\begin{align*}
f(x^{k+1}) + \sum_{j=1}^m r_j(x_j^{k+1}) &\leq f(x^k) + \sum_{j=1}^m r_{j}(x_j^k) + \dotp{\nabla_{j_k} f(x^k) -  \nabla_{j_k} f(x^{k-d_k}) , x_{j_k}^{k+1} - x_{j_k}^k} \\
&\hspace{20pt}- \frac{1}{2}\left(\frac{1}{\gamma_{j_k}^k} - L_{j_k}(x^k_{-j_k})\right)\|x_{j_k}^{k+1} - x_{j_k}^k\|^2.
\end{align*}

The cross term needs care. In particular, the following sequence of inequalities is true for any $C > 0$:
\begin{align*}
&\dotp{\nabla_{j_k} f(x^k) -  \nabla_{j_k} f(x^{k-d_k}), x_{j_k}^{k+1} - x_{j_k}^k}\\
&\leq M\|x^k - x^{k-d_k}\| \|x_{j_k}^{k+1} - x_{j_k}^k\| \qquad \text{(by Assumption~\ref{assump:stochastic})} \\
&\leq \frac{M^2}{2C} \|x^k - x^{k-d_k}\|^2 + \frac{C}{2}\|x_{j_k}^{k+1} - x_{j_k}^k\|^2 \\
&\leq  \frac{M^2\rho_\tau}{2C}\sum_{j=1}^m  \sum_{h=k-d_{k,j}+1}^{k}\|x_j^{h} - x_j^{h-1}\|^2 + \frac{C}{2}\|x_{j_k}^{k+1} - x_{j_k}^k\|^2 \qquad \text{(by Jensen's inequality)}\\
&\leq \frac{M^2\rho_\tau}{2C}\sum_{j=1}^m\sum_{h=k-\tau+1}^{k}\|x_j^{h} - x_j^{h-1}\|^2  + \frac{C}{2}\|x_{j_k}^{k+1} - x_{j_k}^k\|^2 \\
&= \left(\frac{M^2\rho_\tau}{2C}\sum_{h=k-\tau+1}^{k}(h-k+\tau)\|x^{h} - x^{h-1}\|^2 - \frac{M^2\rho_\tau}{2C}\sum_{h=k-\tau+2}^{k+1}(h-(k+1)+\tau)\|x^{h} - x^{h-1}\|^2 \right)\\
&\hspace{20pt} + \frac{M^2\rho_\tau\tau}{2C}\|x^{k+1} - x^k\|^2 + \frac{C}{2}\|x_{j_k}^{k+1} - x_{j_k}^k\|^2.
\end{align*}
We collect all these alternating terms in the sequence $\{\kappa_k\}_{k \in \NN}$, defined by
$$
\kappa_k := \frac{M\sqrt{\rho_\tau}}{2\sqrt{\tau}}\sum_{h=k-\tau+1}^{k}(h-k+\tau)\|x^{h} - x^{h-1}\|^2,
$$
and set $C =M\sqrt{\rho_\tau\tau}$. Thus, because $\|x^{k+1} - x^k\|^2 = \|x_{j_k}^{k+1} - x_{j_k}^k\|^2$, we have
\begin{align*}
\kappa_{k+1}  &\leq \kappa_k - \dotp{\nabla_{j_k} f(x^k) -  \nabla_{j_k} f(x^{k-d_k}) , x_{j_k}^{k+1} - x_{j_k}^k} + \frac{M^2\rho_\tau\tau}{2C}\|x^{k+1} - x^k\|^2 + \frac{C}{2}\|x_{j_k}^{k+1} - x_{j_k}^k\|^2 \\
&=\kappa_k - \dotp{\nabla_{j_k} f(x^k) -  \nabla_{j_k} f(x^{k-d_k}), x_{j_k}^{k+1} - x_{j_k}^k} + \left(\frac{M^2\rho_\tau\tau}{2C} +\frac{C}{2}\right)\|x_{j_k}^{k+1} - x_{j_k}^k\|^2\\
&=\kappa_k - \dotp{\nabla_{j_k} f(x^k) -  \nabla_{j_k} f(x^{k-d_k}), x_{j_k}^{k+1} - x_{j_k}^k}  + M\sqrt{\rho_\tau\tau}\|x_{j_k}^{k+1} - x_{j_k}^k\|^2.
\end{align*}
Therefore, we have 
\begin{align*}
&f(x^{k+1}) + \sum_{j=1}^m r_j(x_j^{k+1}) + \kappa_{k+1} \\
&\leq f(x^k) + \sum_{j=1}^m r_j(x_j^k) + \kappa_k  - \frac{1}{2} \left(\frac{1}{\gamma_{j_k}^k} - L_{j_k}(x^k_{-j_k}) - 2M\sqrt{\rho_\tau\tau}\right)\|x_{j_k}^{k+1} - x_{j_k}^k\|^2 .
\end{align*}
In particular for all $k \in \NN$, we have $\Phi(z^k) =  f(x^k) + \sum_{j=1}^m r_j(x_j^k) + \kappa_k$, so~\eqref{eq:syncfejerinequalitydeterministic} follows.  \qquad \iftechreport\qed\else\fi
\end{proof}

\indent\textbf{Part~\ref{lem:clusterpointsdeterministic:part:criticalobjectiveconstant}:} Let $C$ denote the limit of $\Psi(x^k)$ and $\Phi(z^k)$ as $k \rightarrow \infty$ (which exists by Part~\ref{lem:clusterpointsdeterministic:part:clustercritical}), let $x \in \cC(\{x^k\}_{k \in \NN})$, and suppose that $x^{k_q} \rightarrow x$. Then $C = \lim_{q \rightarrow \infty} \Psi(x^{k_q}) = \Psi(x) = \Phi(x, \ldots, x) = \lim_{q \rightarrow \infty} \Phi(z^{k_q})$. Thus, $\Phi$  (respectively $\Psi$) is constant on $\cC(\{z^k\}_{k \in \NN})$ (respectively $\cC(\{x^k\}_{k \in \NN})$).

The bound on the limit of the objective value is a consequence of~\eqref{eq:syncfejerinequalitydeterministic}: First, 
$$
\Phi(x^0, x^{-1}, \ldots, x^{-\tau}) = \Psi(x^0).
$$
Second, we have
\begin{align*}
\left(\forall k \in \NN\right) \qquad& \Phi(x^{k+1}, x^{k}, \ldots, x^{k-\tau+1}) \leq \Phi(x^{k}, x^{k-1}, \ldots, x^{k-\tau}) - Y_k; \\
\implies&\Phi(x^{k}, x^{k-1}, \ldots, x^{k-\tau}) \leq \Psi(x^0) - \sum_{i=0}^{k-1} Y_i.\numberthis\label{eq:decrease}
\end{align*}

Finally, we have only the strict decrease property left to prove: If $x^0$ is not a stationary point, then for some $k \leq K$, we have $Y_k =  \|x_{j_k}^{k+1} - x_{j_k}^k\| >0$. Thus, the decrease property follows from~\eqref{eq:decrease} and the limit: $\lim_{k \rightarrow \infty} \Psi(x^k) = \lim_{k \rightarrow \infty} \Phi(z^k) < \Psi(x^0)$.

~\\
Finally, we return to the level set inclusion, which now follows easily from~\eqref{eq:syncfejerinequalitydeterministic} (which does not depend on the boundedness of the iterates): \begin{align*}\Psi(x^k) \leq \Phi(x^{k}, x^{k-1}, \ldots, x^{k-\tau}) \leq \Psi(x^0).\end{align*} \iftechreport\qed\else\fi \end{proof}

\begin{remark}\label{rem:deterministiccompact}
The connectedness and compactness of $\cC(\{x^k\}_{k \in \NN})$ are implied by the limit $x^{k} - x^{k+1} \rightarrow 0$; see~\cite[Remark 3.3]{bolte2014proximal} for details.
\end{remark}

Equation~\eqref{eq:syncfejerinequalitydeterministic} figures again below, so we isolate the main content here:
\begin{corollary}[A Decreasing Function Value Bound]\label{cor:objdecreasing}
Regardless of whether $\{x^k\}_{k \in \NN}$ is bounded, there exists $C > 0$ such that for all $k \in \NN$, we have the following bound: 
\begin{align*}
\Phi(z^{k+1}) \leq \Phi(z^k) - C\|x^{k+1} - x^k\|^2. \numberthis\label{eq:objdecreasing}
\end{align*}
\end{corollary}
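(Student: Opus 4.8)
The plan is to read this off the Fej\'er-type descent inequality~\eqref{eq:syncfejerinequalitydeterministic} that was already established inside the proof of Theorem~\ref{thm:clusterpoingsdeterministic}. Recall that with $X_k = \Phi(z^k)$ and $Y_k = \bigl(\tfrac{1}{\gamma_{j_k}^k} - L_{j_k}(x^k_{-j_k}) - 2M\sqrt{\rho_\tau\tau}\bigr)\|x_{j_k}^{k+1} - x_{j_k}^k\|^2$ that inequality reads $X_{k+1} + Y_k \le X_k$, i.e.\ $\Phi(z^{k+1}) \le \Phi(z^k) - Y_k$, and---crucially for the phrase ``regardless of whether $\{x^k\}_{k\in\NN}$ is bounded''---its derivation never used boundedness of the iterates. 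So the whole task reduces to exhibiting a single constant $C>0$ with $Y_k \ge C\|x^{k+1}-x^k\|^2$ for every $k$.

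First I would dispose of the norm: because Algorithm~\ref{alg:deterministicnonconvexSMART} changes only block $j_k$ at step $k$, we have $x_j^{k+1}=x_j^k$ for all $j\ne j_k$, hence $\|x^{k+1}-x^k\| = \|x_{j_k}^{k+1}-x_{j_k}^k\|$. Therefore it is enough to bound the scalar coefficient $\tfrac{1}{\gamma_{j_k}^k} - L_{j_k}(x^k_{-j_k}) - 2M\sqrt{\rho_\tau\tau}$ below by a positive number uniform in $k$; the constant $C$ will then be exactly that lower bound. To get it I would use the step-size rule $\gamma_{j_k}^k = \min\{c(L_{j_k}(x^{k-d_k}_{-j_k}) + 2M\sqrt{\rho_\tau\tau})^{-1},\lambda_r\}$. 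Taking reciprocals, in either branch of the minimum one obtains $\tfrac{1}{\gamma_{j_k}^k} \ge \tfrac1c\bigl(L_{j_k}(x^{k-d_k}_{-j_k}) + 2M\sqrt{\rho_\tau\tau}\bigr)$. Since $c\in(0,1)$ we have $\tfrac1c>1$, so subtracting off the Lipschitz term and $2M\sqrt{\rho_\tau\tau}$ leaves a genuine surplus: provided the block-Lipschitz constant entering the descent is matched by (or dominated by) the one used to build the step size, the coefficient is at least $(\tfrac1c-1)\bigl(L_{j_k}+2M\sqrt{\rho_\tau\tau}\bigr) \ge (\tfrac1c-1)\,2M\sqrt{\rho_\tau\tau}$, and I would set $C := (\tfrac1c-1)\,2M\sqrt{\rho_\tau\tau}>0$.

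The one place requiring care---and the main obstacle---is precisely this matching: the inequality~\eqref{eq:syncfejerinequalitydeterministic} carries the \emph{current}-iterate constant $L_{j_k}(x^k_{-j_k})$ (it comes from the descent lemma applied with the off-block coordinates frozen at their time-$k$ values), whereas the step size is built from the \emph{delayed} constant $L_{j_k}(x^{k-d_k}_{-j_k})$. To keep $C$ positive and uniform I would control both through the standing Assumption~\ref{assump:deterministic}, in particular the uniform bound $L$ in Part~\ref{assump:deterministic:coordinate}; in the coercive, semi-algebraic regime this is automatic, since the level-set inclusion already proved in Theorem~\ref{thm:clusterpoingsdeterministic} places every $x^k$ (and hence, coordinatewise, every delayed read $x^{k-d_k}$) in the box $B$ on which $\nabla f$ is $M$-Lipschitz, so all the block constants are $\le M$ and the surplus coming from $\tfrac1c-1$ together with the strictly positive $2M\sqrt{\rho_\tau\tau}$ is not destroyed. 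Once this bookkeeping is settled, substituting $Y_k \ge C\|x^{k+1}-x^k\|^2$ into $\Phi(z^{k+1})\le\Phi(z^k)-Y_k$ gives~\eqref{eq:objdecreasing} immediately.
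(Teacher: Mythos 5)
Your proposal is correct and is exactly the paper's (implicit) argument: the corollary is introduced only as an isolation of~\eqref{eq:syncfejerinequalitydeterministic}, and the positive constant $C$ is read off from the stepsize rule with $c<1$ together with the identity $\|x^{k+1}-x^k\|=\|x_{j_k}^{k+1}-x_{j_k}^k\|$, just as you do. Your explicit flagging of the mismatch between the delayed constant $L_{j_k}(x^{k-d_k}_{-j_k})$ in the stepsize and the current constant $L_{j_k}(x^k_{-j_k})$ in the descent inequality is a point the paper leaves entirely implicit, and you resolve it the way the standing assumptions intend (uniform control of the block Lipschitz constants), so this is a more careful rendering of the same proof rather than a different one.
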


\subsection{Global Sequence Convergence}

The following Uniformized KL property is key to proving that $\{z^k\}_{k \in \NN}$ converges. 

\begin{theorem}[Uniformized KL Property {\cite[Lemma 3.6]{bolte2014proximal}}]\label{thm:uniformKL}
Let $Q$ be a compact set, let $g : \cH \rightarrow (-\infty, \infty]$ be proper, lower semicontinuous function that is constant on $Q$ and satisfies the KL property at every point of $Q$. Then there exists $\varepsilon > 0, \eta> 0$, and $\varphi \in F_\eta$, such that for all $\overline{u} \in Q$ and all $u$ in the intersection
\begin{align}\label{eq:KLintersection}
\{u \in \cH \mid \dist(u, Q) < \varepsilon \} \cap \{u \in \cH \mid g(\overline{u}) < g(u) < g(\overline{u}) + \eta\},
\end{align}
we have
$$
\varphi'(g(u) - g(\overline{u})) \dist(0, \partial_L g(u)) \geq 1.
$$
\end{theorem}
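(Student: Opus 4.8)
The plan is to localize using the pointwise KL property, patch the resulting local data together over the compact set $Q$ by a finite-subcover argument, and then fuse the finitely many desingularizing functions into a single $\varphi$ by summation. First I would exploit that $g$ is constant on $Q$: write $g|_Q \equiv \sigma$ for some $\sigma \in \RR$, so that for every $\overline{u} \in Q$ the threshold $g(\overline{u})$ appearing in the KL inequality equals the same number $\sigma$, and the second set in \eqref{eq:KLintersection} reduces to $\{u \mid \sigma < g(u) < \sigma + \eta\}$. Applying the KL property at each $\overline{u} \in Q$ yields a radius $\varepsilon_{\overline{u}} > 0$, a height $\eta_{\overline{u}} > 0$, and a function $\varphi_{\overline{u}} \in F_{\eta_{\overline{u}}}$ such that
$$
\varphi_{\overline{u}}'(g(u) - \sigma)\,\dist(0, \partial_L g(u)) \geq 1
$$
for every $u$ with $\|u - \overline{u}\| < \varepsilon_{\overline{u}}$ and $\sigma < g(u) < \sigma + \eta_{\overline{u}}$.

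The balls $\{B(\overline{u}, \varepsilon_{\overline{u}})\}_{\overline{u} \in Q}$ form an open cover of the compact set $Q$, so I would extract a finite subcover $B(\overline{u}_1, \varepsilon_1), \ldots, B(\overline{u}_p, \varepsilon_p)$. To produce the \emph{uniform} radius $\varepsilon$ promised in the statement, set $C := \cH \setminus \bigcup_{i=1}^p B(\overline{u}_i, \varepsilon_i)$. If $C = \emptyset$ any $\varepsilon > 0$ works; otherwise $C$ is closed and disjoint from the compact set $Q$, so $\varepsilon := \dist(Q, C) > 0$, and every $u$ with $\dist(u, Q) < \varepsilon$ must avoid $C$ and hence lie in some $B(\overline{u}_{i}, \varepsilon_{i})$ (if $u \in C$ then $\dist(u,Q) \geq \dist(C,Q) = \varepsilon$, a contradiction). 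Setting $\eta := \min_{1 \leq i \leq p} \eta_i > 0$ then guarantees that each $\varphi_i := \varphi_{\overline{u}_i}$ is defined on all of $[0, \eta)$.

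Finally I would merge the functions by setting $\varphi := \sum_{i=1}^p \varphi_i$. Since $F_\eta$ is closed under finite addition — concavity, continuity, the normalization $\varphi(0) = 0$, $C^1$-smoothness on $(0,\eta)$, and strict positivity of the derivative are all preserved — we have $\varphi \in F_\eta$. Now take any $u$ in the intersection \eqref{eq:KLintersection}. By the covering, $u \in B(\overline{u}_{i_0}, \varepsilon_{i_0})$ for some $i_0$, and $\sigma < g(u) < \sigma + \eta \leq \sigma + \eta_{i_0}$, so the local KL inequality at $\overline{u}_{i_0}$ applies. Because each summand $\varphi_i'$ is strictly positive on $(0,\eta)$, we have $\varphi'(g(u) - \sigma) \geq \varphi_{i_0}'(g(u) - \sigma)$, whence
$$
\varphi'(g(u) - \sigma)\,\dist(0, \partial_L g(u)) \geq \varphi_{i_0}'(g(u) - \sigma)\,\dist(0, \partial_L g(u)) \geq 1,
$$
which is the desired bound, valid for every $\overline{u} \in Q$ since $g(\overline{u}) = \sigma$ throughout.

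The routine verifications (closure of $F_\eta$ under finite sums and the positive-distance argument for the uniform $\varepsilon$) are straightforward; the one genuinely delicate point is selecting a combination rule for the $\varphi_i$ that simultaneously lands in $F_\eta$ \emph{and} dominates each individual derivative. Summation achieves both at once — in contrast to, say, the pointwise maximum, whose derivative need not dominate every $\varphi_i'$ — and this is the step I would take the most care to justify.
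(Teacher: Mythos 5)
Your argument is correct, and it is essentially the standard proof of this result: the paper itself does not prove the theorem but cites it from \cite[Lemma 3.6]{bolte2014proximal}, and the cover-by-balls, take-the-minimum-$\eta$, sum-the-$\varphi_i$ construction you give is precisely the argument in that reference. The two points you flag as delicate (positivity of $\dist(Q,C)$ for the uniform $\varepsilon$, and using summation rather than a pointwise maximum so that $\varphi'$ dominates each $\varphi_{i}'$) are handled correctly.
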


With the uniformized KL property in hand, we can prove that $\{z^k\}_{k \in \NN}$ has finite length and, hence, converges.

\begin{theorem}[A Finite Length Property]\label{thm:KLdeterministic}
Suppose that $\{x^k\}_{k \in \NN}$ is bounded and that $\Phi$ is a KL function. Then
\iftechreport \begin{enumerate} \else \begin{remunerate} \fi
\item \label{thm:KLdeterministic:part:finitelength} The sequence $\{z^k\}_{k \in \NN}$ has finite length, i.e., 
\begin{align*}
\sum_{k=0}^\infty \|z^{k+1} - z^k\| < \infty .
\end{align*}
\item\label{thm:KLdeterministic:part:convergence}The sequence $\{z^k\}_{k \in \NN}$ converges to a stationary point of $ \Phi$, and the sequence $\{x^k\}_{k \in \NN}$ converges to a stationary point of $\Psi$.
\iftechreport \end{enumerate} \else \end{remunerate}\fi 
\end{theorem}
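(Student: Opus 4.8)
The plan is to follow the classic finite-length argument of Bolte--Sabach--Teboulle, using the machinery already assembled in the deterministic case. The two essential ingredients are already in hand: the sufficient-decrease inequality from Corollary~\ref{cor:objdecreasing}, namely $\Phi(z^{k+1}) \leq \Phi(z^k) - C\|x^{k+1} - x^k\|^2$, and the subgradient bound~\eqref{eq:subgradientbound}, which shows that some element of $\partial_L \Phi(z^{k+1})$ is bounded in norm by $c_0 \sum_{h=k-\tau-K}^{k} \|x^{h+1} - x^h\|$. Because $\{x^k\}$ is bounded, so is $\{z^k\}$, and the cluster set $\cC(\{z^k\})$ is a nonempty, compact, connected set on which $\Phi$ is constant (Theorem~\ref{thm:clusterpoingsdeterministic} and Remark~\ref{rem:deterministiccompact}). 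This lets us invoke the Uniformized KL property (Theorem~\ref{thm:uniformKL}) on the compact set $Q = \cC(\{z^k\})$.

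\textbf{First} I would fix the constant $\Phi^\ast = \lim_k \Phi(z^k)$ (which equals the constant value of $\Phi$ on $Q$) and observe that $\Phi(z^k) \geq \Phi^\ast$ for all $k$ by monotone decrease. If $\Phi(z^{k_0}) = \Phi^\ast$ for some finite $k_0$, the decrease inequality forces $x^{k+1} = x^k$ for all $k \geq k_0$ and finite length is trivial, so I may assume $\Phi(z^k) > \Phi^\ast$ for all $k$. Next I would use $\dist(z^k, Q) \to 0$ (which holds because every subsequential limit of $\{z^k\}$ lies in $Q$) together with $\Phi(z^k) \downarrow \Phi^\ast$ to guarantee that, for all $k$ beyond some $k_1$, the point $z^k$ lies in the intersection region~\eqref{eq:KLintersection}. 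Applying the Uniformized KL inequality at $\overline{u} \in Q$ with $g = \Phi$ then yields
\begin{align*}
\varphi'(\Phi(z^k) - \Phi^\ast) \geq \frac{1}{\dist(0, \partial_L \Phi(z^k))} \geq \frac{1}{c_0 \sum_{h=k-\tau-K-1}^{k-1} \|x^{h+1} - x^h\|},
\end{align*}
where the second inequality is exactly~\eqref{eq:subgradientbound}.

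\textbf{The central telescoping step} combines these three facts. Writing $\Delta_k := \varphi(\Phi(z^k) - \Phi^\ast)$ and using concavity of $\varphi$, I would bound
\begin{align*}
\Delta_k - \Delta_{k+1} \geq \varphi'(\Phi(z^k) - \Phi^\ast)\left(\Phi(z^k) - \Phi(z^{k+1})\right) \geq \frac{C\|x^{k+1} - x^k\|^2}{c_0 \sum_{h=k-\tau-K-1}^{k-1}\|x^{h+1} - x^h\|}.
\end{align*}
Rearranging gives $\|x^{k+1} - x^k\|^2 \leq \frac{c_0}{C}(\Delta_k - \Delta_{k+1}) \sum_{h=k-\tau-K-1}^{k-1}\|x^{h+1} - x^h\|$, and then the arithmetic--geometric mean inequality (applied to $\|x^{k+1}-x^k\| = \sqrt{\,\cdot\,}$) converts this into a bound on $\|x^{k+1} - x^k\|$ in terms of $\Delta_k - \Delta_{k+1}$ plus a small multiple of the recent increments $\sum_{h=k-\tau-K-1}^{k-1}\|x^{h+1}-x^h\|$. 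Summing over $k$, the telescoping sum $\sum_k (\Delta_k - \Delta_{k+1})$ is finite (since $\varphi \geq 0$), and the delayed increment terms can be absorbed into the left-hand side because each increment $\|x^{h+1}-x^h\|$ appears only a bounded number ($\tau + K + 1$) of times; this yields $\sum_k \|x^{k+1}-x^k\| < \infty$. Since $\|z^{k+1} - z^k\| \leq \sqrt{\tau+1}\,\max_{0 \le l \le \tau}\|x^{k+1-l} - x^{k-l}\|$, finite length of $\{x^k\}$ transfers directly to $\{z^k\}$, proving Part~\ref{thm:KLdeterministic:part:finitelength}.

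\textbf{For Part~\ref{thm:KLdeterministic:part:convergence}}, finite length makes $\{z^k\}$ Cauchy, hence convergent to some $z^\ast$; its components must all coincide (by the structure $\cC(\{z^k\}) = \{(x,\ldots,x)\}$ from Theorem~\ref{thm:clusterpoingsdeterministic}), so $z^\ast = (x^\ast, \ldots, x^\ast)$ with $x^k \to x^\ast$, and stationarity of $x^\ast$ for $\Psi$ and of $z^\ast$ for $\Phi$ is already guaranteed by Part~\ref{lem:clusterpointsdeterministic:part:clustercritical} of Theorem~\ref{thm:clusterpoingsdeterministic}. \textbf{The main obstacle} I anticipate is the bookkeeping in the telescoping step: unlike the synchronous PALM argument, the subgradient bound~\eqref{eq:subgradientbound} involves a \emph{window} of $\tau + K + 1$ past increments rather than a single increment, so the absorption argument must carefully track that each $\|x^{h+1}-x^h\|$ is reused only finitely (boundedly) often and choose the AM--GM splitting constant small enough that the reused terms do not overwhelm the left-hand side. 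Getting this constant management right — so the finite sum genuinely closes — is the crux of the whole proof.
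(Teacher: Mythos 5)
Your proposal is correct and follows essentially the same route as the paper: uniformized KL on the compact cluster set, the sufficient-decrease bound of Corollary~\ref{cor:objdecreasing}, the windowed subgradient bound~\eqref{eq:subgradientbound}, concavity of $\varphi$, and the AM--GM splitting followed by absorption of the $\tau+K+1$-fold reused increments. The only cosmetic difference is that the paper packages the final absorption/summability step as a standalone lemma (a generalization of~\cite[Lemma 3]{boct2015inertial}) rather than arguing it inline as you do.
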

\begin{proof}
\indent\textbf{Part~\ref{thm:KLdeterministic:part:finitelength}:} Let $z$ be any cluster point of $\{z^k\}$. Then as we argued in Theorem~\ref{thm:clusterpoingsdeterministic}, the following limit holds:
\begin{align}\label{eq:pivotallimit}
\lim_{k \rightarrow \infty} \Phi(z^k) = \Phi(z).
\end{align}

The sequence $\Phi(z^k)$ is decreasing, so if for some $\overline{k} \in \NN$ we have $\Phi(z^{\overline{k}}) = \Phi(z)$, then $\Phi(z^{k}) = \Phi(z)$ for all $k \geq \overline{k}$. In that case, after applying~\eqref{eq:objdecreasing} $\tau$ times, we find that there is a constant $C > 0$ such that for any $k \geq \overline{k}$, we have
\begin{align*}
C\|z^{k+\tau + 1} - z^{k + \tau}\|^2 \leq\Phi(z^{k}) -  \Phi(z^{k + \tau+1}) = 0 
\end{align*}
and moreover,  by a simple induction, we find that $\{z^k\}_{k \in \NN}$ must be eventually constant and, therefore, be of finite length.

On the other hand, if no such $\overline{k}$ exists (and every $z^k$ is non-stationary), then for all $k \in \NN$, we have $\Phi(z^k) > \Phi(z)$. Let $k_0 \in \NN$ be large enough such that (for the $\varepsilon$ and $\eta$ in Theorem~\ref{thm:uniformKL})
\begin{align*}
\left(\forall k \geq k_0\right) \qquad \Phi(z^k) < \Phi(z) + \eta && \text{and} && \dist\left(z^k, \cC\left(\{z^k\}_{k \in \NN}\right)\right) < \varepsilon \numberthis\label{eq:thecorrectset}
\end{align*}
Then $z^k$ belongs to the intersection in~\eqref{eq:KLintersection}  with $Q = \cC(\{z^k\}_{k \in \NN})$ as soon as $k \geq k_0$, and $Q$ is compact by Remark~\ref{rem:deterministiccompact}. 

Now let $\varphi \in F_\eta$ be the concave continuous function from Theorem~\ref{thm:uniformKL}. Then, for $k \geq k_0$, we have 
\begin{align*}
\varphi'(\Phi(z^k) - \Phi(z)) \dist(\partial_L \Phi(z^k), 0) \geq 1.
\end{align*}
Each of the terms in this product can be simplified. First, because $\varphi$ is concave and by the bound in Corollary~\ref{eq:objdecreasing}, we have
\begin{align*}
\varphi(\Phi(z^k) - \Phi(z)) - \varphi(\Phi(z^{k+1}) - \Phi(z)) &\geq \varphi'(\Phi(z^k) - \Phi(z))(\Phi(z^k) - \Phi(z^{k+1})) \\
&\geq \varphi'(\Phi(z^k) - \Phi(z))C\|x^{k+1} - x^{k}\|^2.
\end{align*}
Second, from~\eqref{eq:subgradientbound}, there exists $c_0 > 0$ such that 
\begin{align*}
\varphi'(\Phi(z^k) - \Phi(z)) \geq \frac{1}{\dist(0, \partial_L \Phi(z^k)) }  \geq \frac{1}{c_0\sum_{h = k - \tau - K-1}^{k-1} \|x^{h+1} - x^{h}\|}.
\end{align*}
Altogether, with 
\begin{align*}
\left(\forall k \geq k_0\right)\qquad \epsilon_{k-k_0} := \frac{C}{c_0}\left(\varphi(\Phi(z^k) - \Phi(z)) - \varphi(\Phi(z^{k+1}) - \Phi(z))\right),
\end{align*}
we have 
\begin{align*}
\left(\forall k \geq k_0\right)\qquad \epsilon_{k-k_0} \geq \frac{\|x^{k+1} - x^k\|^2}{\sum_{h = k - \tau - K-1}^{k-1} \|x^{h+1} - x^{h}\|},
\end{align*}
and, moreover, $\sum_{k=0}^\infty \epsilon_k < \infty $. Rearranging, we find that
\begin{align*}
\|x^{k+1} - x^k\| &\leq \sqrt{\left(\sum_{h = k - \tau - K-1}^{k-1} \|x^{h+1} - x^{h}\| \right)\epsilon_{k-k_0}} \\
&\leq \frac{1}{2(\tau + K+1)} \left(\sum_{h = k - \tau-K-1}^{k-1} \|x^{h+1} - x^{h} \|\right) + \frac{(\tau + K+1)}{2} \epsilon_{k-k_0}.
\end{align*}
Thus, to show that the sequence has finite length we apply the following Lemma with $a_{k-k_0} = \|x^{k} - x^{k-1}\|$ and $b_i \equiv (2(\tau + K+1))^{-1}$, which shows that $\sum_{k = 0}^\infty \|x^{k+1} - x^k\| < \infty$ and, consequently, that $\sum_{k=0}^\infty \|z^{k+1} - z^k\| < \infty$.

\begin{lemma}
Let $\{\epsilon_k\}_{k \in\NN}$ be a summable sequence, let $b_0, \ldots, b_{\tau + K}$ be a sequence of nonegative real numbers such that $\sum_{i=0}^{\tau + K} b_i < 1$, and let $\{a_k \}_{k \in \NN} $ be a sequence of nonnegative real numbers (extended to $\ZZ$ by $a_{-k} := a_0$ for all $k \in \NN$) such that for all $k \in \NN$, we have $a_{k+1} \leq \sum_{h = k-\tau-K-1}^{k-1} b_{k+\tau + K+1-h}a_{h+1} + \epsilon_k$.Then $\sum_{k=0}^\infty a_k < \infty$.
\end{lemma}

This Lemma is a straightforward generalization of~\cite[Lemma 3]{boct2015inertial}, so we omit its proof.

\indent\textbf{Part~\ref{thm:KLdeterministic:part:convergence}:} Sequences of finite length are known to be Cauchy and, hence, convergent. Therefore, the sequence $\{z^k\}_{k \in \NN}$ converges.  By Theorem~\ref{thm:clusterpoingsdeterministic} the limit of $\{z^k\}_{k \in \NN}$ limit is a stationary point of $\Phi$, while the limit of $\{x^k\}_{k \in \NN}$ is a stationary point of $\Psi$.\iftechreport\qed\else\fi
\end{proof}

\subsection{Convergence rates}

For convergence rate analysis, the class of semi-algebraic functions (Definition~\ref{def:semialge}), which are known to be KL functions, are the easiest to get a handle on.
It turns out that Algorithm~\ref{alg:deterministicnonconvexSMART} can converge in a finite number of steps, linearly, or sublinearly, depending on a certain exponent $\theta$ defined below, whenever $\Psi$ is semi-algebraic.

\begin{theorem}[Convergence Rates]\label{cor:convergencerate}
Suppose that $\{x^k\}_{k \in \NN}$ is bounded and that $\Phi$ is a KL function. Let $z = (x, \ldots, x) \in \cH^{1+\tau}$ be the limit of $\{z^k\}_{k \in \NN}$ (which exists by Theorem~\ref{thm:KLdeterministic}). Then
\begin{enumerate} 
\item \label{cor:convergencerate:general} In general,
\begin{align*}
\min_{t = 0, \ldots, k}\dist(0, \partial_L \Phi(z^t))= o\left(\frac{1}{k+1}\right) && \text{and} && \min_{t = 0, \ldots, k}\dist(0, \partial_L \Psi(z^t))= o\left(\frac{1}{k+1}\right).
\end{align*}
\item \label{cor:convergencerate:semialgebraic} Suppose $\Psi$ is semi-algebraic. Then $\Phi$ is semi-algebraic, it satisfies the KL inequality with $\varphi(s) := cs^{(1-\theta)}$, where $\theta \in [0, 1)$ and $c > 0$, and
\begin{enumerate}
\item \label{cor:convergencerate:semialgebraic:thetasmall} if $\theta = 0$, then we have $0 \in \partial_L \Phi(z^k)$ and $0 \in \partial_L \Psi(x^k)$ for all sufficiently large $k\in \NN$; 
\item \label{cor:convergencerate:semialgebraic:thetamedium} if $\theta \in (0, 2^{-1}]$, then there exists $\rho \in (0, 1)$ such that 
$$
 \Psi(x^k) - \Psi(x) \leq \Phi(z^k) - \Phi(z) = O\left(\rho^{\floor{\frac{k-k_1}{(\tau+K+1)}}}\right);$$ 
\item \label{cor:convergencerate:semialgebraic:thetalarge} if $\theta \in (2^{-1}, 1)$, then 
$$
\Psi(x^k) - \Psi(x)\leq \Phi(z^k) - \Phi(z)  = O\left(\frac{1}{(k+1)^{\frac{1}{2\theta -1}}}\right).
$$
 \end{enumerate} 
\end{enumerate}
\end{theorem}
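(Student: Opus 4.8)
The plan is to run everything off three facts already in hand: the finite‑length property $\sum_k\|x^{k+1}-x^k\|<\infty$ (Theorem~\ref{thm:KLdeterministic}), the descent inequality $\Phi(z^{k+1})\leq\Phi(z^k)-C\|x^{k+1}-x^k\|^2$ (Corollary~\ref{cor:objdecreasing}, eq.~\eqref{eq:objdecreasing}), and the windowed subgradient bound~\eqref{eq:subgradientbound}, which supplies elements of $\partial_L\Phi(z^k)$ and $\partial_L\Psi(x^k)$ whose norms are at most $c_0\sum_{h=k-\tau-K-1}^{k-1}\|x^{h+1}-x^h\|$. Throughout I write $\delta_k:=\|x^{k+1}-x^k\|$, $r_k:=\Phi(z^k)-\Phi(z)\geq 0$ (decreasing to $0$ by Theorem~\ref{thm:clusterpoingsdeterministic}), and $W:=\tau+K+1$, so that $\dist(0,\partial_L\Phi(z^k))\leq c_0\sum_{h=k-W}^{k-1}\delta_h$.

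For Part~\ref{cor:convergencerate:general} I set $a_t:=\dist(0,\partial_L\Phi(z^t))$ and note that $\sum_t a_t\leq c_0W\sum_h\delta_h<\infty$ by finite length, so $\{a_t\}$ is summable. The $o((k+1)^{-1})$ rate on the running minimum then follows from a soft tail argument: among the at least $k/2$ indices $t\in(k/2,k]$, the minimum of $a_t$ is bounded by the average $\tfrac{2}{k}\sum_{t>k/2}a_t$, whose right‑hand side is the tail of a convergent series and hence tends to $0$; since $\min_{t\leq k}a_t\leq\min_{t\in(k/2,k]}a_t$, we get $(k+1)\min_{t\leq k}a_t\to 0$. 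The identical computation applied to $C^{t-1}\in\partial_L\Psi(x^t)$ (whose norm is likewise dominated by a window sum of increments) gives the companion statement for $\Psi$.

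For Part~\ref{cor:convergencerate:semialgebraic} I first argue that $\Phi$ is semi‑algebraic, being the sum of $\Psi$ (semi‑algebraic, precomposed with the coordinate projection $z\mapsto x(0)$) and a polynomial in the increments, operations that preserve semi‑algebraicity; semi‑algebraic functions are known to satisfy the KL inequality with $\varphi(s)=cs^{1-\theta}$ for some $\theta\in[0,1)$ (the {\L}ojasiewicz exponent). Assuming $r_k>0$ for all $k$ (otherwise descent forces $\{z^k\}$ eventually constant and every claim is immediate) and taking $k$ past the threshold $k_1$ beyond which $z^k$ lies in the uniformized KL region of~\eqref{eq:thecorrectset}, the KL inequality gives $\dist(0,\partial_L\Phi(z^k))\geq r_k^{\theta}/(c(1-\theta))$. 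Comparing with the subgradient upper bound yields $r_k^{\theta}\leq C_1\sum_{h=k-W}^{k-1}\delta_h$; summing the descent inequality over the window $[k-W,k-1]$ telescopes to $\sum_h\delta_h^2\leq C^{-1}(r_{k-W}-r_k)$, and Cauchy--Schwarz over the $W$ terms gives $\sum_{h=k-W}^{k-1}\delta_h\leq\sqrt{W/C}\,\sqrt{r_{k-W}-r_k}$. Combining produces the master recursion
\[
r_k^{2\theta}\;\leq\; D\,(r_{k-W}-r_k), \qquad k\geq k_1+W,
\]
for a constant $D>0$. The bound $\Psi(x^k)-\Psi(x)\leq\Phi(z^k)-\Phi(z)=r_k$ holds throughout because $\Phi(z^k)=\Psi(x^k)+\kappa_k$ with $\kappa_k\geq 0$ and $\Phi(z)=\Psi(x)$, so all rates derived for $r_k$ transfer to $\Psi$.

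The three cases are then read off this recursion, and the one genuinely delicate point — the main obstacle — is precisely that the subgradient bound spans a \emph{window} of $W=\tau+K+1$ delayed increments whereas descent controls only a single increment; this mismatch is the asynchronous wrinkle, and the windowed Cauchy--Schwarz step above is what resolves it, at the cost of the $W$-step lag that later surfaces as $\floor{(k-k_1)/(\tau+K+1)}$ in the stated rate. When $\theta=0$ (case~\ref{cor:convergencerate:semialgebraic:thetasmall}), $\varphi'$ is constant, so KL would force $\dist(0,\partial_L\Phi(z^k))\geq 1/c$ whenever $r_k>0$, contradicting $\dist(0,\partial_L\Phi(z^k))\to 0$; hence $r_k=0$ for large $k$, $\{z^k\}$ is eventually constant, and $0\in\partial_L\Phi(z^k)$, $0\in\partial_L\Psi(x^k)$. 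When $\theta\in(0,\tfrac12]$ (case~\ref{cor:convergencerate:semialgebraic:thetamedium}), since $r_k\leq 1$ eventually and $2\theta\leq 1$ we have $r_k\leq r_k^{2\theta}\leq D(r_{k-W}-r_k)$, giving $r_k\leq\rho\,r_{k-W}$ with $\rho:=D/(1+D)\in(0,1)$; iterating along the arithmetic progression of step $W$ and using monotonicity yields $r_k=O(\rho^{\floor{(k-k_1)/W}})$. When $\theta\in(\tfrac12,1)$ (case~\ref{cor:convergencerate:semialgebraic:thetalarge}), the subsequence $v_m:=r_{k_1+mW}$ obeys $v_m^{2\theta}\leq D(v_{m-1}-v_m)$ with exponent $2\theta>1$, and the standard sublinear‑recursion comparison lemma (cf.~\cite{bolte2014proximal,boct2015inertial}) gives $v_m=O(m^{-1/(2\theta-1)})$, whence $r_k=O((k+1)^{-1/(2\theta-1)})$ by monotonicity.
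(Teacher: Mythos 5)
Your proposal is correct and follows essentially the same route as the paper: the same master recursion $r_k^{2\theta}\leq D(r_{k-W}-r_k)$ obtained by combining the uniformized KL inequality with the windowed subgradient bound~\eqref{eq:subgradientbound} and the telescoped descent estimate~\eqref{eq:objdecreasing}, the same comparison $\Psi(x^k)-\Psi(x)\leq\Phi(z^k)-\Phi(z)$ via $\kappa_k\geq 0$, and the same three-case analysis. The only presentational differences are that you prove the $o((k+1)^{-1})$ running-minimum claim directly by tail averaging where the paper cites a lemma, and you invoke the standard sublinear-recursion lemma for $\theta>\tfrac12$ where the paper reproduces the integral-comparison argument; both substitutions are sound.
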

\begin{proof}
\indent \textbf{Part~\ref{cor:convergencerate:general}:} The finite length property of $z^k$, shown in Theorem~\ref{thm:KLdeterministic}, implies that $\min_{t = 0, \ldots, k}\|z^{t} - z^{t+1}\| = o((k+1)^{-1})$; see \cite[Part 4 of Lemma 3]{DavisYin2014_convergence}. Therefore, from~\eqref{eq:subgradientbound}, we have
\begin{align*}
\dist(0, \partial_L \Phi(z^{k})) \leq \|(A_1^{k-1}, \ldots, A_m^{k-1}, B^{k-1}) \|&= o\left(\frac{1}{k+1}\right); \\
 \text{and} \qquad\dist(0, \partial_L \Psi(x^{k})) \leq \|C^{k-1}\| &= o\left(\frac{1}{k+1}\right).
\end{align*}

\indent \textbf{Part~\ref{cor:convergencerate:semialgebraic}:} The class of semi-algebraic functions is closed under addition. Therefore, because $\Psi$ is semi-algebraic and $\Phi - \Psi$ is semi-algebraic (when $\Psi$ is viewed as a function on $\cH^{1+\tau}$ in the obvious way), it follows that $\Phi$ is semi-algebraic. The claimed form of $\varphi$ follows from~\cite[Section 4.3]{attouch2010proximal}.

Now we assume that $\{z^k\}_{k \in \NN}$ does not converge in finitely many steps; if it did converge in only finitely many steps, all the claimed results clearly hold. As in the proof of Theorem~\ref{thm:KLdeterministic}, we choose $k_0$ large enough that~\eqref{eq:thecorrectset} holds, and we consider only $k \geq k_0$.

We use the shorthand $\Phi_k = \Phi(z^k) - \Phi(z)$, where $z$ is the unique limit point of $\{z^k\}_{k \in \NN}$. Then, by Corollary~\ref{cor:objdecreasing}, we have
\begin{align*}
\Phi_k - \Phi_{k+\tau + K + 1}  \geq C\left(\sum_{h=k}^{k+\tau + K} \|x^{h+1} - x^{h}\|^2\right)\geq \frac{C}{\tau + K+1}\left(\sum_{h=k}^{k+\tau + K} \|x^{h+1} - x^{h}\|\right)^2.
\end{align*}
In addition, as in the proof of~\eqref{thm:KLdeterministic}, we have
\begin{align*}
c(1-\theta)\Phi_{k+\tau + K + 1}^{-\theta} =  \varphi'(\Phi_{k+\tau + K + 1}) \geq \frac{1}{\dist(0, \partial_L \Phi(z^{k+\tau+K+1}))}  \geq \frac{1}{c_0\sum_{h = k}^{k+\tau +K} \|x^{h+1} - x^{h}\|}. \numberthis\label{eq:objectivesummable}
\end{align*}
Therefore, we have
\begin{align*}
\left(\forall k \geq k_0\right) \qquad \Phi_k - \Phi_{k+\tau+K+1} \geq C_1\Phi_{k+\tau + K + 1}^{2\theta}. \numberthis\label{eq:thephi_kbound}
\end{align*}
where $C_1 := C(c^2(1-\theta)^2c_0^2(K+\tau+1))^{-1}$.

\indent \textbf{Part~\ref{cor:convergencerate:semialgebraic:thetasmall}:} Suppose that $\theta = 0$. Then for all $k \geq k_0$, we have $\Phi_k - \Phi_{k+\tau+K+1} \geq C_1 > 0$, which cannot hold because $\Phi_k \rightarrow 0$. Thus, $\{\Phi(z^k)\}_{k \in \NN}$ must converge in finitely many steps, and, by the first inequality of the proof of Theorem~\ref{thm:KLdeterministic}, this implies that $\{z^k\}_{k \in \NN}$ converges to a stationary point of $\Phi$ in finitely many steps. (In particular, $x^k$ also converges to a stationary point of $\Psi$, by Part~\ref{lem:clusterpointsdeterministic:part:clustercritical} of Theorem~\ref{thm:clusterpoingsdeterministic}.)

\indent \textbf{Part~\ref{cor:convergencerate:semialgebraic:thetamedium}:} Suppose that $\theta \in (0, 2^{-1}]$. Choose $k_1\geq k_0$ large enough that $\Phi_k^{2\theta} \geq \Phi_k$ (such a $k_1$ exists because $\Phi_k \rightarrow 0$). Then 
\begin{align*}
\left(\forall k \geq k_1+\tau+K+1\right) \qquad \Phi_{k} \leq \frac{1}{1+C_1}\Phi_{k-K - \tau-1} \implies \Phi_k \leq \left(\frac{1}{1+C_1}\right)^{\floor{\frac{k-k_1}{(\tau+K+1)}}} \Phi_{k_1},
\end{align*}
where we use that $\Phi_k$ is nonincreasing.

\indent \textbf{Part~\ref{cor:convergencerate:semialgebraic:thetalarge}:} Suppose that $\theta \in (2^{-1}, 1)$. Let $h :(0, \infty) \rightarrow (0, \infty)$ be the nonincreasing function $h(s) := s^{-2\theta}$. Then from~\eqref{eq:thephi_kbound} we find that 
\begin{align*}
C_1 \leq  \frac{h(\Phi_{k + \tau+ K + 1})}{h(\Phi_k)}(\Phi_k - \Phi_{k + \tau+ K + 1} )h(\Phi_k) &\leq \frac{h(\Phi_{k + \tau+ K + 1})}{h(\Phi_k)} \int_{\Phi_{k + \tau+ K + 1} }^{\Phi_k} h(s)ds \\
&= \frac{h(\Phi_{k + \tau+ K + 1})}{h(\Phi_k)} \frac{\Phi_{k + \tau+ K + 1}^{1-2\theta} - \Phi_k^{1-2\theta}}{2\theta - 1}.
\end{align*}
Let $R \in (1, \infty)$ be a fixed number. We will deal with the troublesome ratio $h(\Phi_{k + \tau+ K + 1}) (h(\Phi_k))^{-1}$ with two cases.

\textbf{Case 1: $h(\Phi_{k + \tau+ K + 1}) (h(\Phi_k))^{-1} \leq R$}. In this case
\begin{align*}
\frac{C_1}{R} \leq \frac{\Phi_{k + \tau+ K + 1}^{1-2\theta} - \Phi_k^{1-2\theta}}{2\theta - 1}.
\end{align*}

\textbf{Case 2: $h(\Phi_{k + \tau+ K + 1}) (h(\Phi_k))^{-1} > R$}. In this case, we set $q := R^{-1/2\theta} \in (0, 1)$ and deduce the bounds
\begin{align*}
\Phi_{k+\tau+K+1}^{1-2\theta} > q^{1-2\theta} \Phi_{k}^{1-2\theta} \implies (q^{1-2\theta} - 1)\Phi_{k}^{1-2\theta} \leq  \Phi_{k+\tau+K+1}^{1-2\theta} - \Phi_k^{1-2\theta}.
\end{align*}
Choose $k_1 \in \NN$ such that $k_1 \geq k_0$ and $(q^{1-2\theta} - 1)\Phi_{k}^{1-2\theta} > C_1R^{-1}$ (such a $k_1$ exists because $\Phi_k \rightarrow 0$).

Thus, we have the following bounds for all $t \in \NN$:
\begin{align*}
\left(\forall k \geq k_1\right) &\qquad \hspace{10pt}\frac{C_1}{R} \leq \frac{\Phi_{k + \tau+ K + 1}^{1-2\theta} - \Phi_k^{1-2\theta}}{2\theta - 1}\\
&\implies t\frac{C_1}{R} \leq  \frac{\Phi_{k + t(\tau+ K + 1)}^{1-2\theta} - \Phi_k^{1-2\theta}}{2\theta - 1} \\
&\implies \Phi_{k+t(\tau+K+1)} \leq \left(\frac{1}{C_1 t(2\theta-1)R^{-1} + \Phi_k^{1-2\theta}}\right)^{\frac{1}{2\theta - 1}}\\
&\hspace{82.25pt}\leq  \left(\frac{1}{C_1 t(2\theta-1)R^{-1} + \Phi_{k_1}^{1-2\theta}}\right)^{\frac{1}{2\theta - 1}},
\end{align*}
which implies the claimed bound: 
\begin{align*}
\left(\forall k \geq k_1\right) \qquad \Phi_k \leq \left(\frac{1}{C_1\lfloor \frac{k - k_1}{\tau + K + 1}\rfloor(2\theta-1)R^{-1} + \Phi_{k_1}^{1-2\theta}}\right)^{\frac{1}{2\theta - 1}} = O\left(\frac{1}{(k+1)^{\frac{1}{2\theta -1}}}\right).\qquad \iftechreport\qed\else\fi
\end{align*}
\end{proof}

\section{Discussion}\label{sec:discussion}

In this section, we lay out assumptions under which Asynchronous PALM converges. It is likely that weaker assumptions suffice for your favorite model, but let us see how far we can get with the stricter assumptions that we propose---if only to make it easier to design software capable of solving~\eqref{eq:mainprob} for several problems all at once.

\paragraph{Ensuring Boundedness with Coercivity.} To get anywhere in our results, we must assume that the $\{x^k\}_{k \in \NN}$ is bounded. In both the stochastic and deterministic cases there is a sequence $\{z^k\}_{k \in \NN}$ that is bounded if, and only if, $\{x^k\}_{k \in \NN}$ is bounded, and a Lyapunov function $\Phi$, which, for all $k \in \NN$, satisfies one of the following inequalities
\begin{align*}
\eqref{eq:syncfejerinequality} &\implies   \EE\left[\Phi(z^{k+1})\mid \cF_k\right] \leq \Phi(z^{k})  \\
\eqref{eq:objdecreasing} &\implies \Phi(z^{k+1}) \leq \Phi(z^k),
\end{align*}
regardless of whether $\{z^k\}_{k \in \NN}$ is bounded. If the expectation bound holds, the supermartingale convergence theorem (quoted in Theorem~\ref{thm:supermartingale}) implies that the term $\{\Phi(z^{k+1})\}_{k \in \NN}$ is almost surely bounded; similarly, if the deterministic inequality holds, then $\{\Phi(z^{k})\}_{k \in \NN}$ is bounded. Thus, we turn our attention to conditions under which the boundedness of $\{\Phi(z^{k})\}_{k \in \NN}$ implies the boundedness of $\{z^{k}\}_{k \in \NN}$ (we now ignore the distinction between almost sure boundedness and deterministic boundedness). 

In such a general context, the easiest condition to verify is \textit{coercivity} of $\Psi$: 
\begin{align*}
\lim_{\|z\| \rightarrow \infty} \Psi(z) = \infty.
\end{align*}
If coercivity holds, then clearly the boundedness of $\{\Phi(z^{k})\}_{k \in \NN}$ and the bound $\Psi(x^k) \leq \Phi(z^k)$ implies the boundedness of $\{x^k\}_{k \in \NN}$ and $\{z^{k}\}_{k \in \NN}$. Thus, to ensure boundedness of $\{x^k\}_{k \in \NN}$, the most general assumption we employ is that $\Psi$ is coercive.

\paragraph{Ensuring the KL Property with Semi-Algebraicity.}

To prove that the Lyapunov function $\Phi$ has the KL property, it is not necessarily enough to show that $\Psi$ has the KL property. However, because the class of \textit{semi-algebraic} functions (see Definition~\ref{def:semialge}) is closed under addition and $\Phi - \Psi$ is semi-algebraic, it follows that 
\begin{align*}
\Psi \text{ semi-algebraic } \implies \Phi \text{ semi-algebraic}.
\end{align*}
Thus, to ensure $\Phi$ is a KL function, the most general assumption we employ is that $\Psi$ is semi-algebraic.

\paragraph{Ensuring Bounded Lipschitz Constants.} We must assume that $L_j(x_{-j}^k)$ is bounded for all $k$ and $j$ \text{and} that $\nabla f$ has Lipschitz constant $M$ on the set of iterates $\{x^k\}_{k \in \NN}\cup \{x^{k-d_k}\}_{k \in \NN}$. This set is not necessarily bounded, but when $\Psi$ is coercive, we can choose $M$ to be the Lipschitz constant of $\nabla f$ on the minimal box  $B$ containing $\{x \mid \Psi(x) \leq \Psi(x^0)\}$, and with that choice of $M$, one can check by induction that $x^k$ will indeed stay in $B$. In the stochastic case, we cannot guarantee that the iterates lie in the level set, so a similar argument is unavailable.
~\\
\paragraph{Using Linesearch.} A quick look verifies that all results of Section~\ref{sec:deterministic} continue to hold as long as we choose $\gamma_j^k$ in such a way that there exists $C > 0$ with the property that for all $k\in \NN$, we have
\begin{align*}
\Phi(z^{k+1}) \leq \Phi(z^k) - C\|x^{k+1} - x^k\|^2.
\end{align*}
Thus, the following is a valid line search criteria: given $x^{k}$, choose $\gamma > 0$ so that for
\begin{align*}
x_j^{k+1} \in \begin{cases}
\prox_{\gamma r_j}(x_j^{k} - \gamma \nabla_j f(x^{k-d_k})) & \text{if $j = j_k$;} \\
\{x_j^k\} & \text{otherwise,}
\end{cases}  
\end{align*}
we have
\begin{align*}
f(x^{k+1}) + r(x^{k+1}) &+\left(C + \frac{M\sqrt{\rho_\tau\tau}}{2}\right)\|x_{j_k}^{k+1} - x_{j_k}^k\|^2 \\
&\leq f(x^k) + r(x^k)  +\frac{M\sqrt{\rho_\tau}}{2\sqrt{\tau}}\sum_{h=k-\tau+1}^{k}\|x^{h} - x^{h-1}\|^2.
\end{align*}
Importantly, we can quickly update the sum $\xi_k := \frac{M\sqrt{\rho_\tau}}{2\sqrt{\tau}}\sum_{h=k-\tau+1}^{k}\|x^{h} - x^{h-1}\|^2$ by storing the $\tau$ numbers $\frac{M\sqrt{\rho_\tau}}{2\sqrt{\tau}}\|x^{k} - x^{k-1}\|^2, \ldots, \frac{M\sqrt{\rho_\tau}}{2\sqrt{\tau}}\|x^{k-\tau+1} - x^{k-\tau}\|^2$:
\begin{align*}
\xi_{k+1} = \xi_k + \frac{M\sqrt{\rho_\tau}}{2\sqrt{\tau}}\|x_{j_k}^{k+1} - x_{j_k}^k\|^2 - \frac{M\sqrt{\rho_\tau}}{2\sqrt{\tau}}\|x^{k-\tau+1} - x^{k-\tau}\|^2.
\end{align*}

~\\

Thus, with coercivity and the KL property in hand, we have the following theorem: 
\begin{theorem}[Global Convergence of Deterministic Asynchronous PALM]\label{thm:asyncpalmspecific}
Suppose that $\Psi$ is coercive, semi-algebraic, and $\nabla f$ is $M$-Lipschitz continuous on the minimal box $B$ containing the level set $\{x \mid \Psi(x) \leq \Psi(x^0)\}$. Then $\{x^k\}_{k \in \NN}$ from Algorithm~\ref{alg:deterministicnonconvexSMART} globally converges to a stationary point of $\Psi$.
\end{theorem}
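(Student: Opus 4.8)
The plan is to verify the two standing hypotheses of Theorem~\ref{thm:KLdeterministic}—boundedness of $\{x^k\}_{k \in \NN}$ and the KL property for $\Phi$—and then simply invoke that theorem, whose conclusion is exactly global convergence of $\{z^k\}$ and $\{x^k\}$ to stationary points of $\Phi$ and $\Psi$. The whole difficulty is that both Theorem~\ref{thm:KLdeterministic} and the level-set inclusion in Theorem~\ref{thm:clusterpoingsdeterministic} are proved under Assumption~\ref{assump:deterministic}, which presupposes the bounds in Parts~\ref{assump:deterministic:coordinate} and~\ref{assump:determinisistic:global}; here those bounds are not hypotheses but must be \emph{manufactured} from coercivity and the localized Lipschitz assumption. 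Resolving this apparent circularity by induction is the main obstacle.

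First I would run an induction showing $x^k \in B$ for every $k$, where $B$ is the minimal box containing $\{x \mid \Psi(x) \leq \Psi(x^0)\}$. The base case $x^0 \in B$ is immediate. For the inductive step, suppose $x^0, \ldots, x^k \in B$. Because $B$ is a product of balls and each coordinate $x_j^{k - d_{k,j}}$ is drawn from $\{x_j^k, \ldots, x_j^{k-\tau}\}$, the delayed vector $x^{k-d_k}$ also lies in $B$. Hence both $x^k$ and $x^{k-d_k}$ belong to $B$, and the $M$-Lipschitz continuity of $\nabla f$ on $B$ yields $\|\nabla_{j_k} f(x^k) - \nabla_{j_k} f(x^{k-d_k})\| \leq \|\nabla f(x^k) - \nabla f(x^{k-d_k})\| \leq M \|x^k - x^{k-d_k}\|$, which is precisely Part~\ref{assump:determinisistic:global} of Assumption~\ref{assump:deterministic} at step $k$. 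Moreover, for any $x \in B$ the partial Lipschitz constant satisfies $L_j(x_{-j}) \leq M$—two points differing only in coordinate $j$ and both lying in the box $B$ are controlled by the full-gradient bound—so Part~\ref{assump:deterministic:coordinate} holds with $L = M$. With Assumption~\ref{assump:deterministic} now valid at step $k$, the single-step Fej\'er inequality~\eqref{eq:syncfejerinequalitydeterministic} applies, and since the same inequality held at every earlier step by the inductive construction, we may telescope: $\Psi(x^{k+1}) \leq \Phi(z^{k+1}) \leq \Phi(z^k) \leq \cdots \leq \Phi(z^0) = \Psi(x^0)$. Thus $x^{k+1} \in B$, closing the induction.

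Two consequences follow immediately. Since $B$ is a bounded box, the inclusion $\{x^k\}_{k \in \NN} \subseteq B$ shows that $\{x^k\}_{k \in \NN}$ is bounded; coercivity of $\Psi$ gives the same conclusion and, moreover, guarantees that $B$ is bounded in the first place. Next I would establish the KL property for $\Phi$: because $\Psi$ is semi-algebraic and the delay term $\Phi - \Psi$ is a finite sum of squared-norm polynomials—hence semi-algebraic on $\cH^{1+\tau}$—and the class of semi-algebraic functions is closed under addition, $\Phi$ is semi-algebraic and therefore a KL function.

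With $\{x^k\}_{k \in \NN}$ bounded and $\Phi$ a KL function, the hypotheses of Theorem~\ref{thm:KLdeterministic} are met, so $\{z^k\}_{k \in \NN}$ converges to a stationary point of $\Phi$ and $\{x^k\}_{k \in \NN}$ converges to a stationary point of $\Psi$, as claimed. I expect the induction of the first step—carefully tracking that the delayed read $x^{k-d_k}$ stays in the box $B$ so that the localized Lipschitz hypothesis can be upgraded to the global-looking bound of Part~\ref{assump:determinisistic:global}—to be the only genuinely nontrivial ingredient; everything else is bookkeeping layered on top of the already-proved Theorems~\ref{thm:clusterpoingsdeterministic} and~\ref{thm:KLdeterministic}.
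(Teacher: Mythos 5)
Your proposal is correct and follows essentially the same route as the paper: Section~\ref{sec:discussion} obtains boundedness from coercivity via the decreasing Lyapunov bound $\Psi(x^k)\leq\Phi(z^k)\leq\Psi(x^0)$, gets the KL property for $\Phi$ from semi-algebraicity of $\Psi$ plus closure under addition, notes that ``one can check by induction that $x^k$ will indeed stay in $B$'' so that the Lipschitz hypotheses of Assumption~\ref{assump:deterministic} are met, and then invokes Theorem~\ref{thm:KLdeterministic}. Your only real addition is that you actually carry out the induction (using that $B$ is a box so the delayed read $x^{k-d_k}$ stays in $B$), which the paper leaves as a one-line remark.
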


\subsection{Example: Generalized Low Rank Models}
A broad family of models with which hidden low rank structure of data may be discovered, analyzed, and sometimes, enforced, has been outlined in the Generalized Low Rank Model (GLRM) framework proposed in~\cite{udell2014generalized}. The original PALM~\cite{bolte2014proximal} algorithm was motivated by the most fundamental of all GLRMs, namely matrix factorization, and since the time that PALM was introduced, the authors of~\cite{udell2014generalized} have used this approach quite successfully to optimize other, more general low rank models.

\paragraph{Model.} In a GLRM, you are given a mixed data type matrix $A \in T^{d_1 \times d_2}$, which has, for example, real, boolean, or categorical entries represented by $T$. In the theology of GLRMs, we imagine that there are two collections of vectors $\{x_{i, 1}\}_{i =0}^{d_1} \subseteq \RR^{d}$ and $\{x_{l, 2}\}_{l=0}^{d_2} \subseteq \RR^{d}$ for which, in the case of a real-valued matrix $A$, we have $\dotp{x_{i,2}, x_{l,2}} \approx A_{il}$; but in general there is a differentiable loss function $f_{il}(\cdot, A_{il}) : \RR \rightarrow \RR$, with $L_{il}$-Lipschitz continuous derivative $f_{il}'$, that measures how closely $\dotp{x_{i,2}, x_{l,2}}$ represents $A_{il}$. Then we define the global loss function from these local terms:
\begin{align*}
f(x_{1, 1}, \ldots, x_{d_1,1}, x_{1, 2}, \ldots, x_{d_2, 2}) := \sum_{i=1}^{d_1} \sum_{l=1}^{d_2} f_{il}(\dotp{x_{i, 1}, x_{l, 2}}; A_{il}).
\end{align*}
For the special case of real-valued matrix factorization, the local terms are all identical and equal to $f_{il}(a, A_{il}) := 2^{-1}(a - A_{il})^2$ and the Lipschitz constant is $L_{il} \equiv 1$. 

GLRMs gain a lot of biasing power from adding nonsmooth, nonconvex regularizers $r_{i, 1} , r_{l, 2} : \RR^d \rightarrow \RR$ to the global loss function $f$ which, after renaming $x := (x_{1, 1}, \ldots, x_{d_1,1}, x_{1, 2}, \ldots, x_{d_2, 2}) \in \RR^{d \times (d_1 + d_2)}$, leads to the final objective function:
\begin{align*}
\Psi(x) := f(x) + \sum_{i=1}^{d_1} r_{i, 1}(x_{i, 1}) + \sum_{l=1}^{d_2} r_{l, 2}(x_{l, 2}).
\end{align*}

\paragraph{Lipschitz Constants.} The component-wise Lipschitz constants of the partial gradients (we just look at the $x_{i,1}$ components; the other case is symmetric)
\begin{align*}
\nabla_{x_{i,1}} f(x) = \sum_{l=1}^{d_2} x_{l, 2} f_{il}'(\dotp{x_{i, 1}, x_{l, 2}}; A_{il})
\end{align*}
are easily seen to be $\sum_{l=1}^{d_2} \|x_{l,2}\|L_{il}$. Thus, if $\{x^k\}_{k \in \NN}$ is a bounded sequence, then the Lipschitz constants $L_{j}(x_{-j}^k)$ remain bounded for all $j$ and $k$. Further,  simple probing reveals that $\nabla f$ is Lipschitz on bounded sets.

\paragraph{Coercivity.} Among the special cases of GLRM objectives, coercivity holds, for example, for all variants of PCA, all variants of matrix factorization, quadratic clustering and mixtures, and subspace clustering. 

\paragraph{KL Property via Semi-Algebraicity.} Among the special cases of GLRM objectives, semi-algebraicity holds, for example, for standard, quadratically regularized, and sparse PCA; nonnegative, nonnegative orthogonal, and max norm matrix factorization;  quadratic clustering; quadratic mixtures; and subspace clustering.

~\\~\\
Thus, if they are semi-algebraic and cocoercive, GLRMs form a perfect set of examples for the PALM algorithm, and more generally, our Asynchronous PALM algorithm. Likely, most of the GLRMs considered in~\cite{udell2014generalized} will also meet the general KL assumption (as opposed to semi-algebraicity), however, verifying this condition requires a bit more work, in a direction orthogonal to the direction of this paper.

\section{Conclusion}

The Asynchronous PALM algorithm minimizes our model problem~\eqref{eq:mainprob} by allowing asynchronous parallel inconsistent reading of data---an algorithmic feature that, when implemented on an $n$ core computer, often speeds up algorithms by a factor proportional to $n$. 

Problem~\eqref{eq:mainprob} is a relatively simple nonsmooth, nonconvex optimization problem, but it figures prominently in the GLRM framework. A yet to be realized extension of this work might complicate our model problem~\eqref{eq:mainprob} by letting each of the regularizers $r_j$ depend on more than one of the optimization variables. Such an extension would significantly extend the reach of first-order algorithms in nonsmooth, nonconvex optimizations.
\paragraph{\textbf{Acknowledgements:}} We thank Brent Edmunds, Robert Hannah, and Professors Madeleine Udell and Stephen J. Wright for helpful comments.

\bibliographystyle{spmpsci}
\bibliography{bibliography}


\end{document}